\setlist[enumerate,1]{label={\normalfont(\roman*)},itemsep=\parskip} 
\setlist[itemize,1]{itemsep=\parskip} 
\newlist{thmlist}{enumerate}{2}
\setlist[thmlist,1]{label={\em(\roman*)},ref={(\roman*)},%
  itemsep=\parskip,leftmargin=*,align=left}
\setlist[thmlist,2]{label={\em(\alph*)},ref={(\alph*)},%
  itemsep=\parskip,leftmargin=*,align=left,topsep=0.1cm}
\newlist{remlist}{enumerate}{2}
\setlist[remlist,1]{label={(\roman*)},ref={(\roman*)},itemsep=\parskip,%
  leftmargin=*,align=left}
\setlist[remlist,2]{label={(\alph*)},ref={(\alph*)},itemsep=\parskip,%
  leftmargin=*,align=left,topsep=0.1cm}
\numberwithin{equation}{section}
\newtheorem{cor}[equation]{Corollary}
\newtheorem{lem}[equation]{Lemma}
\newtheorem{prop}[equation]{Proposition}
\newtheorem{thm}[equation]{Theorem}
\newtheorem*{claim*}{Claim}
\theoremstyle{definition}
\newtheorem{rem}[equation]{Remark}
\renewcommand{\eqref}[1]{(\ref{#1})}
\newcommand{\nc}{\newcommand}
\nc{\renc}{\renewcommand}
\nc{\ssec}{\subsection}
\nc{\sssec}{\subsubsection}
\nc{\on}{\operatorname}
\nc{\term}[1]{#1\xspace}
\newcommand{\p}{\mathfrak{p}}
\DeclareMathSymbol{A}{\mathalpha}{operators}{`A}
\DeclareMathSymbol{B}{\mathalpha}{operators}{`B}
\DeclareMathSymbol{C}{\mathalpha}{operators}{`C}
\DeclareMathSymbol{D}{\mathalpha}{operators}{`D}
\DeclareMathSymbol{E}{\mathalpha}{operators}{`E}
\DeclareMathSymbol{F}{\mathalpha}{operators}{`F}
\DeclareMathSymbol{G}{\mathalpha}{operators}{`G}
\DeclareMathSymbol{H}{\mathalpha}{operators}{`H}
\DeclareMathSymbol{I}{\mathalpha}{operators}{`I}
\DeclareMathSymbol{J}{\mathalpha}{operators}{`J}
\DeclareMathSymbol{K}{\mathalpha}{operators}{`K}
\DeclareMathSymbol{L}{\mathalpha}{operators}{`L}
\DeclareMathSymbol{M}{\mathalpha}{operators}{`M}
\DeclareMathSymbol{N}{\mathalpha}{operators}{`N}
\DeclareMathSymbol{O}{\mathalpha}{operators}{`O}
\DeclareMathSymbol{P}{\mathalpha}{operators}{`P}
\DeclareMathSymbol{Q}{\mathalpha}{operators}{`Q}
\DeclareMathSymbol{R}{\mathalpha}{operators}{`R}
\DeclareMathSymbol{S}{\mathalpha}{operators}{`S}
\DeclareMathSymbol{T}{\mathalpha}{operators}{`T}
\DeclareMathSymbol{U}{\mathalpha}{operators}{`U}
\DeclareMathSymbol{V}{\mathalpha}{operators}{`V}
\DeclareMathSymbol{W}{\mathalpha}{operators}{`W}
\DeclareMathSymbol{X}{\mathalpha}{operators}{`X}
\DeclareMathSymbol{Y}{\mathalpha}{operators}{`Y}
\DeclareMathSymbol{Z}{\mathalpha}{operators}{`Z}
\nc{\sA}{\ensuremath{\mathcal{A}}\xspace}
\nc{\sB}{\ensuremath{\mathcal{B}}\xspace}
\nc{\sC}{\ensuremath{\mathcal{C}}\xspace}
\nc{\sD}{\ensuremath{\mathcal{D}}\xspace}
\nc{\sE}{\ensuremath{\mathcal{E}}\xspace}
\nc{\sF}{\ensuremath{\mathcal{F}}\xspace}
\nc{\sG}{\ensuremath{\mathcal{G}}\xspace}
\nc{\sH}{\ensuremath{\mathcal{H}}\xspace}
\nc{\sI}{\ensuremath{\mathcal{I}}\xspace}
\nc{\sJ}{\ensuremath{\mathcal{J}}\xspace}
\nc{\sK}{\ensuremath{\mathcal{K}}\xspace}
\nc{\sL}{\ensuremath{\mathcal{L}}\xspace}
\nc{\sM}{\ensuremath{\mathcal{M}}\xspace}
\nc{\sN}{\ensuremath{\mathcal{N}}\xspace}
\nc{\sO}{\ensuremath{\mathcal{O}}\xspace}
\nc{\sP}{\ensuremath{\mathcal{P}}\xspace}
\nc{\sQ}{\ensuremath{\mathcal{Q}}\xspace}
\nc{\sR}{\ensuremath{\mathcal{R}}\xspace}
\nc{\sS}{\ensuremath{\mathcal{S}}\xspace}
\nc{\sT}{\ensuremath{\mathcal{T}}\xspace}
\nc{\sU}{\ensuremath{\mathcal{U}}\xspace}
\nc{\sV}{\ensuremath{\mathcal{V}}\xspace}
\nc{\sW}{\ensuremath{\mathcal{W}}\xspace}
\nc{\sX}{\ensuremath{\mathcal{X}}\xspace}
\nc{\sY}{\ensuremath{\mathcal{Y}}\xspace}
\nc{\sZ}{\ensuremath{\mathcal{Z}}\xspace}
\nc{\bA}{\ensuremath{\mathbf{A}}\xspace}
\nc{\bB}{\ensuremath{\mathbf{B}}\xspace}
\nc{\bC}{\ensuremath{\mathbf{C}}\xspace}
\nc{\bD}{\ensuremath{\mathbf{D}}\xspace}
\nc{\bE}{\ensuremath{\mathbf{E}}\xspace}
\nc{\bF}{\ensuremath{\mathbf{F}}\xspace}
\nc{\bG}{\ensuremath{\mathbf{G}}\xspace}
\nc{\bH}{\ensuremath{\mathbf{H}}\xspace}
\nc{\bI}{\ensuremath{\mathbf{I}}\xspace}
\nc{\bJ}{\ensuremath{\mathbf{J}}\xspace}
\nc{\bK}{\ensuremath{\mathbf{K}}\xspace}
\nc{\bL}{\ensuremath{\mathbf{L}}\xspace}
\nc{\bM}{\ensuremath{\mathbf{M}}\xspace}
\nc{\bN}{\ensuremath{\mathbf{N}}\xspace}
\nc{\bO}{\ensuremath{\mathbf{O}}\xspace}
\nc{\bP}{\ensuremath{\mathbf{P}}\xspace}
\nc{\bQ}{\ensuremath{\mathbf{Q}}\xspace}
\nc{\bR}{\ensuremath{\mathbf{R}}\xspace}
\nc{\bS}{\ensuremath{\mathbf{S}}\xspace}
\nc{\bT}{\ensuremath{\mathbf{T}}\xspace}
\nc{\bU}{\ensuremath{\mathbf{U}}\xspace}
\nc{\bV}{\ensuremath{\mathbf{V}}\xspace}
\nc{\bW}{\ensuremath{\mathbf{W}}\xspace}
\nc{\bX}{\ensuremath{\mathbf{X}}\xspace}
\nc{\bY}{\ensuremath{\mathbf{Y}}\xspace}
\nc{\bZ}{\ensuremath{\mathbf{Z}}\xspace}
\nc{\dA}{\ensuremath{\mathds{A}}\xspace}
\nc{\dB}{\ensuremath{\mathds{B}}\xspace}
\nc{\dC}{\ensuremath{\mathds{C}}\xspace}
\nc{\dD}{\ensuremath{\mathds{D}}\xspace}
\nc{\dE}{\ensuremath{\mathds{E}}\xspace}
\nc{\dF}{\ensuremath{\mathds{F}}\xspace}
\nc{\dG}{\ensuremath{\mathds{G}}\xspace}
\nc{\dH}{\ensuremath{\mathds{H}}\xspace}
\nc{\dI}{\ensuremath{\mathds{I}}\xspace}
\nc{\dJ}{\ensuremath{\mathds{J}}\xspace}
\nc{\dK}{\ensuremath{\mathds{K}}\xspace}
\nc{\dL}{\ensuremath{\mathds{L}}\xspace}
\nc{\dM}{\ensuremath{\mathds{M}}\xspace}
\nc{\dN}{\ensuremath{\mathds{N}}\xspace}
\nc{\dO}{\ensuremath{\mathds{O}}\xspace}
\nc{\dP}{\ensuremath{\mathds{P}}\xspace}
\nc{\dQ}{\ensuremath{\mathds{Q}}\xspace}
\nc{\dR}{\ensuremath{\mathds{R}}\xspace}
\nc{\dS}{\ensuremath{\mathds{S}}\xspace}
\nc{\dT}{\ensuremath{\mathds{T}}\xspace}
\nc{\dU}{\ensuremath{\mathds{U}}\xspace}
\nc{\dV}{\ensuremath{\mathds{V}}\xspace}
\nc{\dW}{\ensuremath{\mathds{W}}\xspace}
\nc{\dX}{\ensuremath{\mathds{X}}\xspace}
\nc{\dY}{\ensuremath{\mathds{Y}}\xspace}
\nc{\dZ}{\ensuremath{\mathds{Z}}\xspace}
\nc{\bbA}{\ensuremath{\mathbb{A}}\xspace}
\nc{\bbB}{\ensuremath{\mathbb{B}}\xspace}
\nc{\bbC}{\ensuremath{\mathbb{C}}\xspace}
\nc{\bbD}{\ensuremath{\mathbb{D}}\xspace}
\nc{\bbE}{\ensuremath{\mathbb{E}}\xspace}
\nc{\bbF}{\ensuremath{\mathbb{F}}\xspace}
\nc{\bbG}{\ensuremath{\mathbb{G}}\xspace}
\nc{\bbH}{\ensuremath{\mathbb{H}}\xspace}
\nc{\bbI}{\ensuremath{\mathbb{I}}\xspace}
\nc{\bbJ}{\ensuremath{\mathbb{J}}\xspace}
\nc{\bbK}{\ensuremath{\mathbb{K}}\xspace}
\nc{\bbL}{\ensuremath{\mathbb{L}}\xspace}
\nc{\bbM}{\ensuremath{\mathbb{M}}\xspace}
\nc{\bbN}{\ensuremath{\mathbb{N}}\xspace}
\nc{\bbO}{\ensuremath{\mathbb{O}}\xspace}
\nc{\bbP}{\ensuremath{\mathbb{P}}\xspace}
\nc{\bbQ}{\ensuremath{\mathbb{Q}}\xspace}
\nc{\bbR}{\ensuremath{\mathbb{R}}\xspace}
\nc{\bbS}{\ensuremath{\mathbb{S}}\xspace}
\nc{\bbT}{\ensuremath{\mathbb{T}}\xspace}
\nc{\bbU}{\ensuremath{\mathbb{U}}\xspace}
\nc{\bbV}{\ensuremath{\mathbb{V}}\xspace}
\nc{\bbW}{\ensuremath{\mathbb{W}}\xspace}
\nc{\bbX}{\ensuremath{\mathbb{X}}\xspace}
\nc{\bbY}{\ensuremath{\mathbb{Y}}\xspace}
\nc{\bbZ}{\ensuremath{\mathbb{Z}}\xspace}
\nc{\mrm}[1]{\ensuremath{\mathrm{#1}}\xspace}
\nc{\mbf}[1]{\ensuremath{\mathbf{#1}}\xspace}
\nc{\mcal}[1]{\ensuremath{\mathcal{#1}}\xspace}
\nc{\msc}[1]{\ensuremath{\mathscr{#1}}\xspace}
\renc{\bar}[1]{\overline{#1}}
\let\sectsign\S
\let\S\relax
\nc{\sub}{\subset}
\nc{\too}{\longrightarrow}
\nc{\hook}{\hookrightarrow}
\nc*{\hooklongrightarrow}{\ensuremath{\lhook\joinrel\relbar\joinrel\rightarrow}}
\nc{\hooklong}{\hooklongrightarrow}
\nc{\twoheadlongrightarrow}{\relbar\joinrel\twoheadrightarrow}
\nc{\shiso}{\approx}
\nc{\isoto}{\xrightarrow{\sim}}
\nc{\isofrom}{\xleftarrow{\sim}}
\renc{\ge}{\geqslant}
\renc{\le}{\leqslant}
\renc{\geq}{\geqslant}
\renc{\leq}{\leqslant}
\nc{\id}{\mathrm{id}}
\DeclareMathOperator{\Hom}{\mathrm{Hom}}
\nc{\uHom}{{\smash{\Hom}}}
\DeclareMathOperator{\Maps}{\mathrm{Maps}}
\DeclareMathOperator{\End}{\mathrm{End}}
\nc{\Pre}{\mathrm{PSh}{}}
\nc{\Shv}{\mathrm{Shv}{}}
\nc{\uEnd}{{\smash{\End}}}
\renc{\lim}{\operatorname*{lim}}
\nc{\colim}{\operatorname*{colim}}
\nc{\Cofib}{\on{Cofib}}
\nc{\Fib}{\on{Fib}}
\nc{\initial}{\varnothing}
\nc{\op}{\mathrm{op}}
\renc{\coprod}{\sqcup}
\nc{\bDelta}{\mbf{\Delta}}
\nc{\DM}{\mbf{DM}}
\nc{\eff}{\mathrm{eff}}
\nc{\veff}{\mathrm{veff}}
\nc{\cyc}{{\mrm{cyc}}}
\nc{\corr}{{\on{corr}}}
\nc{\ft}{\mrm{ft}}
\nc{\flf}{\mrm{flf}}
\nc{\fet}{{\mrm{f\acute et}}}
\nc{\fsyn}{{\mrm{fsyn}}}
\nc{\syn}{{\mrm{syn}}}
\nc{\lci}{{\mrm{lci}}}
\nc{\Perf}{\mbf{Perf}}
\nc{\perf}{\mrm{perf}}
\nc{\oblv}{\mrm{oblv}}
\nc{\exact}{\on{exact}}
\nc{\F}{{\on{F}}}
\nc{\clopen}{{\mrm{clopen}}}
\nc{\B}{\mrm{B}}
\nc{\D}{\mrm{D}}
\nc{\Fin}{\on{Fin}}
\nc{\fin}{\mrm{fin}}
\nc{\Cut}{\on{Cut}}
\nc{\Cart}{\on{Cart}}
\nc{\pairs}{\mathsf{pairs}}
\nc{\Pairs}{\mathrm{Pair}}
\nc{\Trip}{\mathrm{Trip}}
\nc{\Lab}{\mathrm{Lab}}
\nc{\SL}{\mathrm{SL}}
\nc{\coCart}{\mathrm{coCart}}
\nc{\RKE}{\mathrm{RKE}}
\nc{\strict}{\mathrm{strict}}
\nc{\Emb}{\mathrm{Emb}}
\nc{\Split}{\mathrm{Split}}
\nc{\Set}{\mathrm{Set}}
\nc{\sSets}{\mathrm{sSets}}
\nc{\pb}{\mathrm{pb}}
\nc{\fib}{\mathrm{fib}}
\nc{\diff}{\mrm{diff}}
\nc{\gp}{\mrm{gp}}
\nc{\map}{\mrm{map}}
\nc{\mgp}{\mrm{mot-gp}}
\nc{\FSyn}{\mrm{FSyn}}
\nc{\FEt}{\mrm{FEt}}
\nc{\Spc}{\mrm{Spc}}
\nc{\Ob}{\mrm{Ob}}
\nc{\Spt}{\mrm{Spt}}
\nc{\T}{\bT}
\nc{\suspinf}{\Sigma^\infty}
\nc{\h}{\mrm{h}}
\nc{\uhom}{{\mathrm{Hom}}}
\nc{\umap}{{\mathrm{Maps}}}
\renc{\H}{\bH}
\nc{\Einfty}{{\sE_\infty}}
\nc{\Eone}{{\sE_1}}
\nc{\Stab}{\mrm{Stab}}
\nc{\lax}{{\mrm{lax}}}
\nc{\cocart}{{\mrm{cocart}}}
\nc{\Sch}{\mrm{Sch}}
\nc{\Fr}{\on{Fr}}
\nc{\A}{\mathbf{A}}
\nc{\N}{\mathbf{N}}
\nc{\Z}{\mathbf{Z}}
\nc{\Q}{\mathbf{Q}}
\nc{\Oo}{\mathcal{O}} 
\nc{\Fscr}{\mathcal{F}}
\nc{\Gscr}{\mathcal{G}}
\nc{\Ll}{\mathcal{L}} 
\nc{\Mm}{\mathcal{M}} 
\nc{\mm}{\mathrm{m}} 
\nc{\K}{\mrm{K}} 
\nc{\W}{\mrm{W}} 
\nc{\red}{{\on{red}}}
\nc{\Voev}{{\on{Voev}}}
\nc{\Corr}{\mrm{Corr}}
\nc{\Span}{\mathbf{Corr}}
\nc{\Gap}{\mrm{Gap}}
\nc{\Corrfr}{\Corr^{\fr}}
\nc{\Corrvfr}{\Corr^{\Vfr}}
\nc{\Spec}{\on{Spec}}
\nc{\Sm}{\on{Sm}}
\nc{\Gm}{\mathbf{G}_{\on{m}}}
\renc{\P}{\bP}
\nc{\nis}{\mathrm{nis}}
\nc{\zar}{\mathrm{zar}}
\nc{\et}{\mathrm{\acute et}}
\nc{\all}{\mathrm{all}}
\nc{\fold}{\mathrm{fold}}
\nc{\Fun}{\mathrm{Fun}}
\nc{\Ho}{\mathrm{Ho}}
\nc{\Segal}{\mathrm{Segal}}
\nc{\Mon}{\mrm{Mon}{}}
\nc{\Ab}{\mrm{Ab}}
\nc{\Sh}{\on{Sh}}
\nc{\M}{\mrm{M}}
\nc{\Lhtp}{L_{\A^1}}
\nc{\Lmot}{L_{\mrm{mot}}}
\nc{\mot}{\mrm{mot}}
\nc{\SH}{\mbf{SH}}
\nc{\RR}{\mbf{R}}
\nc{\CC}{\mbf{C}}
\nc{\Mod}{\mbf{Mod}}
\nc{\QCoh}{\mbf{QCoh}}
\nc{\MonUnit}{\mbf{1}}
\nc{\tr}{\on{tr}}
\nc{\cotr}{\mrm{cotr}}
\nc{\vop}{\mrm{vop}}
\nc{\fr}{{\on{fr}}}
\nc{\Ar}{\mrm{Ar}}
\nc{\Vfr}{\on{Vfr}}
\nc{\frdiff}{{\on{frdiff}}}
\nc{\frGys}{\on{frGys}}
\nc{\SHfr}{\SH^{\fr}}
\nc{\SHfrdiff}{\SH^{\frdiff}}
\nc{\SHfrGys}{\SH^{\frGys}}
\nc{\InftyCat}{(\mathrm{\infty,1)\textnormal{-}Cat}}
\nc{\TriCat}{\mathrm{TriCat}}
\nc{\oneCat}{\mathrm{1\textnormal{-}Cat}}
\nc{\Cat}{\mathrm{Cat}}
\nc{\Th}{\on{Th}}
\nc{\CMon}{\mrm{CMon}{}}
\nc{\CAlg}{\mrm{CAlg}{}}
\nc{\MGL}{\mrm{MGL}}
\nc{\Seg}{\mrm{Seg}{}}
\nc{\GW}{\mrm{GW}{}}
\nc{\Tw}{\mrm{Tw}}
\nc{\sslash}{/\mkern-6mu/}
\nc{\PrL}{\mrm{Pr}^\mrm{L}}
\nc{\PrR}{\mrm{Pr}^\mrm{R}}
\nc{\pr}{\mrm{pr}}
\let\phi\varphi
\nc\efr{\mrm{efr}}
\nc\nfr{\mrm{nfr}}
\nc\dfr{\mrm{fr}}
\nc\tfr{\mrm{tfr}}
\nc\Vect{\mrm{Vect}}
\nc\sVect{\mrm{sVect}}
\nc{\fix}{\mrm{fix}}
\nc{\ho}{\mrm{h}}
\nc\Mfd{\mrm{Mfd}}
\nc{\PSh}{\mrm{PSh}}
\nc{\hzmw}{H \tilde\Z{}}
\nc{\Cor}{\mrm{Cor}{}}
\nc{\cormw}{\mrm{\widetilde{Cor}}{}}
\nc{\Chw}{\mrm{\widetilde{CH}}{}}
\nc{\Ex}{\mrm{Ex}}
\nc{\BM}{\mrm{BM}}
\nc{\Pic}{\mrm{Pic}}
\nc{\pur}{\mathfrak p}
\nc{\angles}[1]{\langle #1\rangle}
\nc{\inv}[1]{[\tfrac{1}{#1}]}
\nc{\pinv}{\inv{p}}
\nc{\cinv}{\inv{p}}
\nc{\Sph}{\on{Sph}}
\nc{\KGL}{\mrm{KGL}}
\nc{\KH}{\mrm{KH}}
\nc{\Flag}{\mrm{Flag}}
\nc{\Pro}{\mrm{Pro}}
\nc{\Frac}{\mrm{Frac}}
\newcommand{\fp}{\mathrm{fp}}
\nc{\arc}{\mrm{arc}}
\nc{\rarc}{\mrm{rarc}}
\nc{\cdarc}{\mrm{cdarc}}
\nc{\vv}{\mrm{v}}
\nc{\rv}{\mrm{rv}}
\nc{\cdv}{\mrm{cdv}}
\nc{\hh}{\mrm{h}}
\nc{\cdh}{\mrm{cdh}}
\nc{\rh}{\mathrm{rh}}
\nc{\Et}{\mathrm{Et}}
\nc{\Nis}{\mathrm{Nis}}
\nc{\Zar}{\mathrm{Zar}}
\nc{\cdp}{\mathrm{cdp}}
\nc{\RZ}{\mathrm{RZ}}
\nc{\qcqs}{\mathrm{qcqs}}
\nc{\aff}{\mathrm{aff}}
\nc{\cl}{\mathrm{cl}}
\nc{\Val}{\mathrm{Val}}
\nc{\GFin}{\mathrm{GFin}{}}
\nc{\Proj}{\mathrm{Proj}}
\nc{\Ind}{\mrm{Ind}}
\let\emptyset=\varnothing
\nc{\bigH}{\H{}}
\nc{\bigSH}{\SH{}}
\nc{\DA}{\mathbf{DA}}
\nc{\ct}{\mathrm{ct}}
\nc{\lc}{\mathrm{lc}}
\nc{\cd}{\mathrm{cd}}
\nc{\lcd}{\mathrm{lcd}}
\nc{\vcd}{\mathrm{vcd}}
\nc{\inftyCat}{\term{$\infty$-category}}
\nc{\inftyCats}{\term{$\infty$-categories}}
\nc{\inftyOneCat}{\term{$(\infty,1)$-category}}
\nc{\inftyOneCats}{\term{$(\infty,1)$-categories}}
\nc{\inftyGrpd}{\term{$\infty$-groupoid}}
\nc{\inftyGrpds}{\term{$\infty$-groupoids}}
\nc{\inftyTop}{\term{$\infty$-topos}}
\nc{\inftyTops}{\term{$\infty$-toposes}}
\nc{\inftyTwoCat}{\term{$(\infty,2)$-category}}
\nc{\inftyTwoCats}{\term{$(\infty,2)$-categories}}
\title{Milnor excision for motivic spectra}
\author[E. Elmanto]{Elden Elmanto}
\address{Department of Mathematics\\
Harvard University\\
1 Oxford St.\\
Cambridge, MA 02138\\
USA}
\email{\href{mailto:elmanto@math.harvard.edu}{elmanto@math.harvard.edu}}
\urladdr{\url{https://www.eldenelmanto.com/}}
\author[M. Hoyois]{Marc Hoyois}
\address{Fakultät für Mathematik\\
Universität Regensburg\\
Universitätsstr. 31\\
93040 Regensburg\\
Germany}
\email{\href{mailto:marc.hoyois@ur.de}{marc.hoyois@ur.de}}
\urladdr{\url{http://www.mathematik.ur.de/hoyois/}}
\author[R. Iwasa]{Ryomei Iwasa}
\address{K\o benhavns Universitet\\
Institut for Matematiske Fag\\
Universitetsparken 5\\
2100 K\o benhavn\\
Denmark}
\email{\href{mailto:ryomei@math.ku.dk}{ryomei@math.ku.dk}}
\urladdr{http://ryomei.com/}
\author[S. Kelly]{Shane Kelly}
\address{Department of Mathematics\\
Tokyo Institute of Technology\\
2-12-1 Ookayama, Meguro-ku\\
Tokyo 152-8551, Japan}
\email{\href{mailto:shanekelly@math.titech.ac.jp}{shanekelly@math.titech.ac.jp}}
\urladdr{http://www.math.titech.ac.jp/~shanekelly/}
\date{\today}
\begin{document}

\begin{abstract} 
	We prove that the $\infty$-category of motivic spectra satisfies Milnor excision: if $A\to B$ is a morphism of commutative rings sending an ideal $I\subset A$ isomorphically onto an ideal of $B$, then a motivic spectrum over $A$ is equivalent to a pair of motivic spectra over $B$ and $A/I$ that are identified over $B/IB$. Consequently, any cohomology theory represented by a motivic spectrum satisfies Milnor excision. We also prove Milnor excision for Ayoub's étale motives over schemes of finite virtual cohomological dimension.
\end{abstract}

\maketitle

\parskip 0pt
\tableofcontents

\parskip 0.2cm

\section{Introduction}
For $S$ a scheme, let $\SH(S)$ be the $\infty$-category of motivic spectra over $S$.

\begin{thm}\label{thm:main}
	The presheaf of $\infty$-categories $\SH(-)\colon \Sch^\op \to\Cat_\infty$ satisfies Milnor excision.
\end{thm}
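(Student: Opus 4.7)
The plan is to proceed in three stages: a continuity reduction to the noetherian case, a reformulation via the open-closed recollement of $\SH$, and a core comparison of gluing functors that should reduce to a formal-completion rigidity statement for motivic spectra.

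\textbf{Step 1: Continuity reduction.} Since $\SH(-)$ is continuous for cofiltered limits of qcqs schemes with affine transition maps, a general Milnor square $(A,B,I)$ can be written as a filtered colimit of finite-type Milnor squares over $\Z$ by filtering the defining data $A$, $B$, and $I \isoto IB$ by finitely generated pieces. Filtered colimits of $\infty$-categories commute with finite limits, so it suffices to prove Milnor excision in the noetherian case.

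\textbf{Step 2: Recollement reformulation.} Set $Z = \Spec(A/I) \overset{i}{\hookrightarrow} X = \Spec A$ with open complement $j\colon U \hookrightarrow X$, and analogously $Z' = \Spec(B/IB) \overset{i_B}{\hookrightarrow} X' = \Spec B$, $\phi\colon X' \to X$, $\pi\colon Z' \to Z$. A short computation using $I \isoto IB$ gives $A[1/a] \isoto B[1/a]$ for every $a \in I$: any $b \in B$ satisfies $ab \in IB = \phi(I)$, so $b \in A[1/a]$. Hence $\phi$ is an isomorphism over $U$ and the square is Cartesian. The open-closed recollement presents $\SH(X)$ as the oplax limit of the gluing functor $\Phi := i^* j_*\colon \SH(U) \to \SH(Z)$, and likewise $\SH(X')$ from $\Phi' := i_B^* j_{B,*}\colon \SH(U) \to \SH(Z')$. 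Smooth base change for the open immersion $j$ gives $\phi^* j_{A,*} \simeq j_{B,*}$, and combined with $\pi^* i^* = i_B^* \phi^*$ this yields $\pi^* \Phi \simeq \Phi'$. Unwinding the pullback $\SH(X') \times_{\SH(Z')} \SH(Z)$ against these recollement descriptions, the Milnor excision equivalence $\SH(X) \isoto \SH(X') \times_{\SH(Z')} \SH(Z)$ reduces to the statement that for every $Y \in \SH(U)$ the canonical comparison $\Phi Y \to \pi_* \Phi' Y$ is an equivalence---equivalently, that $\pi^*\colon \SH(Z) \to \SH(Z')$ is fully faithful on the essential image of $\Phi$.

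\textbf{Step 3: The main obstacle.} This full faithfulness is the heart of the argument. Because $\pi$ is not in general proper, the required identification $i^* j_* \isoto \pi_* i_B^* j_{B,*}$ is not an instance of standard base change. The plan is to argue that $\Phi Y$ depends only on the formal neighborhood of $Z$ in $X$, and to match these formal neighborhoods across the square. The algebraic input is that $\widehat{A}_I \simeq \widehat{B}_{IB}$: by induction on $n$, the identification $I \isoto IB$ gives $I^n \isoto (IB)^n$, and then the same pullback argument that produced the base Milnor square yields $A/I^n \isoto B/(IB)^n$ for every $n \geq 1$. The motivic input---a formal-GAGA or rigidity statement asserting that $i^* j_*$ factors through the pro-system of infinitesimal thickenings of $Z$ in $X$---is the delicate piece. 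For this I would exploit the framed-transfers formalism of Elmanto--Hoyois--Khan--Sosnilo--Yakerson together with rigidity results for motivic spectra along henselian pairs in the spirit of Bachmann and Elmanto--Khan. Combining such a motivic rigidity statement with the algebraic identification $\widehat{A}_I \simeq \widehat{B}_{IB}$ would then close the proof.
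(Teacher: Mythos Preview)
The real gap is Step 3. You propose that $i^* j_*$ depends only on the formal completion $\widehat{A}_I$, which would be a form of formal GAGA for $\SH$. No such statement is known, and there is no reason to expect it integrally: the gluing functor $i^* j_*$ depends on the global geometry of $U$, not merely on an infinitesimal neighborhood of $Z$. The rigidity results you invoke concern henselian pairs with torsion coefficients, not integral $\SH$, and a henselian pair is in any case far more than a formal neighborhood. So the algebraic identification $\widehat{A}_I \simeq \widehat{B}_{IB}$, while correct, gives no traction on $\SH$. (There is also a smaller gap in Step 2: ``smooth base change for the open immersion $j$'' does not justify $\phi^* j_{A,*} \simeq j_{B,*}$, since smooth base change requires the morphism you pull back along---here $\phi$---to be smooth. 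Your reduction to $\Phi \to \pi_*\Phi'$ being an equivalence is nonetheless correct, as one sees by comparing the two recollements directly as lax limits; but the ``equivalently'' clause, which relies on $\pi^*\Phi\simeq\Phi'$, is not.)

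The paper's route is entirely different and avoids formal completions. It applies \cite[Theorem 3.3.4]{cdarc} to the finitary cdh sheaf $\SH(-)^\omega$ to reduce Milnor excision to \emph{v-excision}: cartesianness of $\SH(V) \to \SH(V_\p)\times_{\SH(\kappa(\p))}\SH(V/\p)$ for a valuation ring $V$ of finite rank and a prime $\p\subset V$. Note that this passes through non-noetherian schemes, so your Step 1 reduction to the noetherian case is in the wrong direction. A recollement analysis similar to your Step 2 then reduces v-excision to showing that $u_! t_! \to u_* t_!$ is an equivalence for nested open immersions $T\hookrightarrow U\hookrightarrow \Spec V$. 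This is proved by embedding $\SH$ into the cdh-local theory $\bigSH_\cdh$ (using Cisinski's cdh descent), where $u_*$ preserves motivic equivalences, reducing the question to the presheaf level. There it follows from an elementary fact specific to valuation rings: the image of a connected finitely presented $V$-scheme in $\Spec V$ is an interval in the specialization order.
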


This means the following \cite[Definition 3.2.3]{cdarc}: given a cartesian square of schemes
\[
\begin{tikzcd}
	W \ar{d}[swap]{g} \ar{r}{k} & Y \ar{d}{f} \\
	Z \ar{r}{i} & X
\end{tikzcd}
\]
where $f$ is affine, $i$ is a closed immersion, and the induced map $Y\sqcup_WZ \to X$ is an isomorphism, the square of $\infty$-categories
\[
\begin{tikzcd}
	\SH(X) \ar{d}[swap]{f^*} \ar{r}{i^*} & \SH(Z) \ar{d}{g^*} \\
	\SH(Y) \ar{r}{k^*} & \SH(W)
\end{tikzcd}
\]
is cartesian.

If $E\in\SH(S)$ is a motivic spectrum and $X$ is an $S$-scheme, we denote by $E(X)\in\Spt$ the mapping spectrum from $\1_X$ to $E_X$ in $\SH(X)$. An immediate consequence of Theorem~\ref{thm:main} is the following:

\begin{cor}\label{cor:main}
	Let $S$ be a scheme and $E\in\SH(S)$. Then the presheaf of spectra $E(-)\colon \Sch_S^\op\to \Spt$ satisfies Milnor excision.
\end{cor}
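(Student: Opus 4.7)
The plan is to deduce the corollary directly from \thmref{thm:main}, exploiting that a cartesian square of $\infty$-categories induces a cartesian square on mapping spectra out of any object. Given a Milnor square of $S$-schemes as in the definition preceding \thmref{thm:main}, I would first apply the theorem to identify $\SH(X)$ with the pullback $\SH(Y)\times_{\SH(W)}\SH(Z)$.

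Next I would track the objects of interest through this equivalence. Under it, the unit $\1_X$ corresponds to the pair $(\1_Y,\1_Z)$ together with the canonical identification $k^*\1_Y\simeq\1_W\simeq g^*\1_Z$, since each of the pullback functors $f^*, i^*, g^*, k^*$ is symmetric monoidal and so preserves units. Likewise, writing $E_T$ for the pullback of $E$ along $T\to S$, the object $E_X$ corresponds to $(E_Y,E_Z)$ with the canonical compatibility $k^*E_Y\simeq E_W\simeq g^*E_Z$ inherited from the functoriality of $E_{(-)}$.

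Then I would invoke the formal fact that in a limit of stable (more generally, pointed) $\infty$-categories the mapping spectrum functor commutes with the limit. Applying this to $\SH(Y)\times_{\SH(W)}\SH(Z)$ gives
\[
\Hom_{\SH(X)}(\1_X,E_X) \simeq \Hom_{\SH(Y)}(\1_Y,E_Y)\times_{\Hom_{\SH(W)}(\1_W,E_W)}\Hom_{\SH(Z)}(\1_Z,E_Z),
\]
which by definition of $E(-)$ is exactly the statement that the square
\[
\begin{tikzcd}
E(X) \ar{d} \ar{r} & E(Z) \ar{d} \\
E(Y) \ar{r} & E(W)
\end{tikzcd}
\]
is cartesian in $\Spt$.

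The real obstacle is \thmref{thm:main} itself; once it is granted, the corollary reduces to two standard formal inputs (preservation of units by symmetric monoidal pullback and commutation of mapping spectra with limits of stable $\infty$-categories), so the remainder of the argument is bookkeeping rather than a genuine difficulty.
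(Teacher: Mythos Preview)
Your argument is correct and matches the paper's intent: the paper states the corollary as ``an immediate consequence'' of \thmref{thm:main} without writing out a proof, and the steps you supply (symmetric monoidal pullbacks preserve units and $E_{(-)}$, mapping spectra commute with limits of stable $\infty$-categories) are exactly the formal bookkeeping needed to make that immediacy precise. The paper also notes in a later remark that Corollary~\ref{cor:main} is equivalent to the full faithfulness of the left adjoint in the adjunction~\eqref{eqn:adjunction}, which is a reformulation of the same observation.
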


Setting $E^{p,q}(X)= [\1_X, \Sigma^{p,q}E_X]$, Corollary~\ref{cor:main} yields a long exact sequence of abelian groups
\[
\cdots \rightarrow E^{p,q}(X) \rightarrow E^{p,q}(Z) \oplus E^{p,q}(Y) \rightarrow E^{p,q}(W)\rightarrow E^{p+1,q}(X) \rightarrow \cdots.
\]
Milnor excision was originally considered in the context of algebraic K-theory by Milnor, who established such a long exact sequence for the algebraic K-groups in degrees $\leq 1$ \cite[Theorem 3.3]{milnor-ktheory}.
This gave a simple proof of Rim's theorem asserting an isomorphism between $K_0$ of the group ring $\Z[C_p]$ and of the ring of cyclotomic integers $\Z[e^{2\pi i/p}]$. While excision does not hold in general for $K$-theory \cite{swan}, Suslin–Wodzicki \cite{suslin-wodzicki} and Suslin \cite{suslin-excision} isolated conditions on squares of (possibly noncommutative) rings for which excision holds. Switching perspective by considering excision for other cohomology theories in algebraic geometry has also been fruitful, notably in Swan's study of the relationship between seminormality and the $\A^1$-invariance of the Picard group \cite{swan-seminormality}. In recent years, Milnor excision and its variants have had influential roles in settling old conjectures (such as Weibel's conjecture \cite{KST}) and bringing new insights into cohomology theories in algebraic geometry and other subjects \cite{arc,land-tamme}.

Corollary~\ref{cor:main} furnishes a large collection of cohomology theories for which excision holds, vastly generalizing our previous work for motivic cohomology \cite[Theorem D]{cdarc}. 
It also recovers Weibel's excision theorem for the homotopy K-theory of commutative rings \cite[Theorem 2.1]{Weibel}, as well as Bhatt and Mathew's excision theorem for étale cohomology with coefficients in a torsion sheaf \cite[Theorem 5.4]{arc}, provided that the torsion is coprime to the residual characteristics.
One can regard Corollary~\ref{cor:main} as a commutative analogue of Land and Tamme's excision theorem for noncommutative motives \cite[Theorem B]{land-tamme}.
If $S=\Spec R$, an equivalent formulation of Corollary~\ref{cor:main} is that the canonical extension of $E(-)$ to nonunital commutative $R$-algebras sends short exact sequences to fiber sequences (cf.\ \cite[Remark 3.2.6]{cdarc}). 

Combining Corollary~\ref{cor:main} with \cite[Lemma 3.5(ii)]{kelly-morrow}, we obtain the following result, which verifies the property (G2) from \cite[Theorem 1]{shane-better} for the homotopy presheaves of any motivic spectrum over a perfect field:

\begin{cor}
	Let $k$ be a perfect field and $E\in\SH(k)$. For every valuation ring $V$ over $k$, henselian along an ideal $I\subset V$, the map $\pi_* E(V) \to \pi_* E(V/I)$ is surjective.
\end{cor}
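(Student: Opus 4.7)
I would combine \corref{cor:main} (Milnor excision for $E$) with the commutative-algebra input \cite[Lemma 3.5(ii)]{kelly-morrow} concerning henselian valuation pairs over perfect fields.

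The rough shape of the argument is as follows. Milnor excision for $E$, established in \corref{cor:main}, allows one to identify the fiber of $E(V) \to E(V/I)$ with the corresponding fiber for any Milnor square containing $V \to V/I$ as its upper edge. On the other hand, \cite[Lemma 3.5(ii)]{kelly-morrow} produces, for the henselian pair $(V, I)$ with $V$ a valuation ring over the perfect field $k$, such an auxiliary Milnor square
\[
\begin{tikzcd}
V \ar[r] \ar[d] & V/I \ar[d] \\
B \ar[r] & B/IB
\end{tikzcd}
\]
(satisfying the hypotheses of \thmref{thm:main}) in which the bottom map $B \to B/IB$ is visibly split on $\pi_*$, typically via a $k$-algebra section $B/IB \to B$ producing, by functoriality of $E(-)$, a section of spectra $E(B/IB) \to E(B)$. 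This forces the connecting homomorphism $\pi_{n+1} E(B/IB) \to \pi_n \fib(E(B) \to E(B/IB))$ to vanish for every $n$. By the naturality of the identification of fibers
\[
\fib\bigl(E(V) \to E(V/I)\bigr) \;\simeq\; \fib\bigl(E(B) \to E(B/IB)\bigr)
\]
given by \corref{cor:main}, the connecting homomorphism $\pi_{n+1} E(V/I) \to \pi_n \fib(E(V) \to E(V/I))$ vanishes as well, and this is precisely the assertion that $\pi_* E(V) \twoheadrightarrow \pi_* E(V/I)$ is surjective in every degree.

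The main obstacle is the commutative-algebra content of \cite[Lemma 3.5(ii)]{kelly-morrow}: constructing the auxiliary Milnor square with a split quotient genuinely uses the perfectness of $k$ and the henselian structure on $(V, I)$. Granting the lemma, the remainder is a formal two-step deduction: apply \corref{cor:main} to the Milnor square, then chase long exact sequences using the naturality of connecting maps under the cartesian square.
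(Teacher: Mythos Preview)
Your proposal is correct and takes essentially the same approach as the paper: both combine \corref{cor:main} with \cite[Lemma~3.5(ii)]{kelly-morrow}. The paper's proof is in fact just the one-line citation ``Combining \corref{cor:main} with \cite[Lemma~3.5(ii)]{kelly-morrow}, we obtain\dots''; your write-up additionally sketches the mechanism behind the Kelly--Morrow lemma (the auxiliary Milnor square with a section on the bottom edge and the resulting vanishing of the connecting map), which the paper leaves as a black box.
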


Here is a brief outline of the proof of Theorem~\ref{thm:main}. Using the main result of \cite{cdarc}, we first reduce it to the statement that $\SH(-)$ satisfies v-excision, which is special case of Milnor excision involving valuation rings (Theorem~\ref{thm:v-excision}). In turn, this is equivalent to a certain unexpected functorial property of $\SH(-)$ with respect to localizations of valuation rings (Equation~\eqref{eqn:SH-transformation}). 
To prove the latter, the main idea is to pass to the larger $\infty$-category $\bigSH_\cdh(-)$ built from the cdh site instead of the smooth Nisnevich site; the cdh descent property of motivic spectra proved by Cisinski \cite{Cisinski} implies that $\bigSH_\cdh(-)$ contains $\SH(-)$ as a full subcategory. This allows us to take advantage of the fact that pushforward along an open immersion preserves cdh-local equivalences (Lemma~\ref{lem:cdh-cocontinuous}). This fact further reduces the question to the level of presheaves (Lemma~\ref{lem:key}), where it boils down to a simple geometric property of valuation rings (Lemma~\ref{lem:interval}).

\section{Reduction of Milnor excision to v-excision} 

Since $\SH(-)$ is a Zariski sheaf, we need only prove Theorem~\ref{thm:main} for qcqs schemes.
We shall do this by applying \cite[Theorem 3.3.4]{cdarc} to the presheaf
\[
\SH(-)^\omega\colon\Sch^\mathrm{qcqs,op} \to \Cat_\infty,
\]
where $\SH(X)^\omega\subset\SH(X)$ is the full subcategory of compact objects. To explain how, we need a pair of technical lemmas.

\begin{lem}\label{lem:cg-pullback}
	Let $Q$ be a commutative square of small $\infty$-categories:
	\[
	\begin{tikzcd}
		\sA \ar{d}[swap]{h} \ar{r}{f} & \sB \ar{d}{k} \\
		\sC \ar{r}{g} & \sD\rlap.
	\end{tikzcd}
	\]
	\begin{enumerate}
		\item If $\sA$ is idempotent complete and $\Ind(Q)$ is cartesian, then $Q$ is cartesian.
		\item Suppose that $Q$ is a square of stable $\infty$-categories and exact functors. If $g$ has a fully faithful right adjoint and $Q$ is cartesian, then $\Ind(Q)$ is cartesian.
	\end{enumerate}
\end{lem}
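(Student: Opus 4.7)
For part (i), I will show that the canonical comparison functor $\phi\colon \sA \to \sB \times_\sD \sC$ is an equivalence by separately verifying full faithfulness and essential surjectivity. For full faithfulness, given $x,y\in\sA$, the mapping space $\Hom_{\sB\times_\sD\sC}(\phi x,\phi y)$ is by construction the pullback of $\Hom_\sB$ and $\Hom_\sC$ over $\Hom_\sD$, and this coincides with the mapping space of the corresponding objects in $\Ind(\sB)\times_{\Ind(\sD)}\Ind(\sC)$ since mapping spaces in an $\infty$-category agree with those in its Ind-completion; using the hypothesis that $\Ind(Q)$ is cartesian this in turn equals $\Hom_{\Ind(\sA)}(x,y)=\Hom_\sA(x,y)$. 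For essential surjectivity, any object $(B,C,\alpha)\in\sB\times_\sD\sC$ is compact when viewed in $\Ind(\sB)\times_{\Ind(\sD)}\Ind(\sC)$, because $B$ and $C$ are compact in the respective Ind-categories and finite limits of mapping spaces commute with filtered colimits. Under the equivalence $\Ind(\sA)\simeq\Ind(\sB)\times_{\Ind(\sD)}\Ind(\sC)$, this lifts to a compact object of $\Ind(\sA)$, which belongs to $\sA$ itself by idempotent completeness; unpacking the commutativity of the equivalence with the projections shows that its image under $\phi$ is $(B,C,\alpha)$. The main technical point is the identification of compact objects in the Ind-pullback, which is a standard consequence of filtered colimits commuting with finite limits.

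For part (ii), the plan is to exploit the fact that a fully faithful right adjoint $g_R$ makes $g$ a Verdier quotient, producing a recollement structure on the pullback. Setting $\sK := \ker(g)\subset\sC$, the pullback $\sA$ fits into a Verdier sequence of stable $\infty$-categories
\[
\sK \hookrightarrow \sA \xrightarrow{f} \sB,
\]
where $\sK$ embeds via $X\mapsto(0,X)$ and $f$ is itself a Verdier quotient whose fully faithful right adjoint sends $B$ to $(B, g_R k(B))$ with the structure map induced by the inverse of the counit $gg_R\simeq\id_\sD$. Applying $\Ind$ to this sequence, and using that $\Ind(g)$ admits $\Ind(g_R)$ as a fully faithful right adjoint (both are colimit-preserving, and on compacts they agree with $g\dashv g_R$), yields a Bousfield localization sequence
\[
\Ind(\sK) \hookrightarrow \Ind(\sA) \to \Ind(\sB)
\]
of presentable stable $\infty$-categories. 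The Ind-pullback $\sP := \Ind(\sB)\times_{\Ind(\sD)}\Ind(\sC)$ fits into an analogous localization sequence by the same formal argument applied to the square $\Ind(Q)$, whose kernel $\ker(\Ind(g))$ equals $\Ind(\sK)$ (generated by $\sK$ under filtered colimits inside $\Ind(\sC)$) and whose quotient is $\Ind(\sB)$. The canonical comparison functor $\Ind(\sA)\to\sP$ is thus a map of recollements inducing the identity on kernel and on quotient, hence an equivalence. The main obstacle will be to verify the compatibility of the gluing data on both sides; this reduces to the naturality of the recollement construction applied to $Q$ versus $\Ind(Q)$.
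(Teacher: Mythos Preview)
Your argument for part (i) is correct and is essentially a spelled-out version of the paper's one-line citation to \cite[Lemma~5.4.5.7(2)]{HTT}; that lemma says precisely that the comparison $\sA\to\sB\times_\sD\sC$ identifies with the inclusion of compact objects in the Ind-pullback, which is an equivalence when $\sA$ is idempotent complete.

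For part (ii) your approach is correct but genuinely different from the paper's. The paper forms the Ind-pullback $\sE$, observes that $\sA\subset\sE^\omega$ by part~(i), and then shows by hand that $\sA$ generates $\sE$: an object $e\in\sE$ orthogonal to $\sA$ is first shown to have $h(e)$ in the image of the right adjoint $r$ of $g$ (by testing against $\ker f\simeq\ker g$), and then to have $f(e)=0$ (by testing against objects of the form $(b, r k(b))$); together these force $e=0$. Your recollement argument is a structural repackaging of exactly this dichotomy: the two classes of test objects in the paper's proof are the kernel $\sK$ and the image of the section $\sB\to\sA$, and your claim that the comparison $\Ind(\sA)\to\sP$ is a map of localization sequences inducing equivalences on both pieces is the categorical formulation of the same computation. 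The paper's route is shorter and avoids invoking general facts about maps of recollements; your route is more conceptual but, as you note, requires verifying that the comparison functor intertwines the right adjoints of the two projections to $\Ind(\sB)$. That compatibility does hold (since $\Ind$ applied to $b\mapsto(b,g_R k(b))$ agrees with $b\mapsto(b,\Ind(g_R)\Ind(k)(b))$), so your plan goes through, but you should make this explicit rather than leaving it as ``naturality''.
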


\begin{proof}
	(i) This follows from \cite[Lemma 5.4.5.7(2)]{HTT}. 
	
	(ii) Form the cartesian square
	\[
	\begin{tikzcd}
		\sE \ar{d}[swap]{h} \ar{r}{f} & \Ind(\sB) \ar{d}{k} \\
		\Ind(\sC) \ar{r}{g} & \Ind(\sD)\rlap.
	\end{tikzcd}
	\]
	It follows from \cite[Lemma 5.4.5.7(2)]{HTT} that $\sA\subset \sE^\omega$, so it suffices to show that $\sA$ generates $\sE$. 
	Let $e\in\sE$ be such that $\Maps(a,e)=0$ for all $a\in\sA$; we must show that $e=0$.
	Let $r$ be the right adjoint of $g$.
	If $a\in\sA$ is in the kernel of $f$ (equivalently, of $g$), $\Maps(a,e)\simeq \Maps(a,h(e))$. Hence, $h(e)$ is right orthogonal to the kernel of $g$, so $h(e)=rgh(e)$. 
	On the other hand, if $a$ is the image of $b\in\sB$ by the functor $\sB\to\sA$ induced by $r\circ k$, then 
	\[
	\Maps(a,e) \simeq \Maps(b,f(e)) \times_{\Maps(k(b),kf(e))} \Maps(rk(b),h(e)) \simeq \Maps(b,f(e)),
	\]
	since $h(e)=rkf(e)$ and $r$ is fully faithful. This shows that $f(e)=0$, hence also $h(e)=0$, hence $e=0$.
\end{proof}

\begin{lem}\label{lem:cg-filtered}
	Let $\sK$ be a filtered $\infty$-category and $D\colon \sK^\triangleright\to \Cat_\infty$ a diagram of small $\infty$-categories with finite colimits and right exact functors. Let $\widehat D\colon (\sK^\op)^\triangleleft \to\Cat_\infty$ be the diagram obtained from $D$ by applying $\Ind$ and passing to right adjoints.
	\begin{enumerate}
		\item If $D(k)$ is idempotent complete for all $k\in\sK$ and $\widehat D$ is a limit diagram, then $D$ is a colimit diagram.
		\item If $D$ is a colimit diagram, then $\widehat D$ is a limit diagram.
	\end{enumerate}
\end{lem}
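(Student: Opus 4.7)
Both parts follow from the interplay of a few standard facts from \cite{HTT}: (a) $\Ind$ preserves filtered colimits of small $\infty$-categories with finite colimits and right exact functors; (b) passage to right adjoints yields an anti-equivalence $\PrL \simeq (\PrR)^\op$ converting colimit diagrams into limit diagrams; (c) $\Ind$ restricts to an equivalence between idempotent complete such $\infty$-categories and compactly generated presentable $\infty$-categories $\PrL_\omega$ (with compact-object-preserving left adjoints); and (d) limits in $\PrR$ are computed in $\widehat{\Cat}_\infty$. Under these, the proof amounts to chasing $D$ across the square formed by $\Ind$ and right-adjointing, in one direction for (ii) and the reverse for (i).

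For part (ii), apply (a) to see that $\Ind \circ D$ is a colimit diagram in $\PrL$; since each transition functor in $D$ is right exact and $D(k)$ is (after idempotent completion) compactly generating, the induced left adjoints preserve compact objects, so this colimit in fact lives in $\PrL_\omega$. Passing to right adjoints via (b), the resulting diagram --- which by construction is $\widehat D$ --- is a limit diagram in $\PrR$, hence by (d) also a limit diagram in $\Cat_\infty$.

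For part (i), reverse the chain. The hypothesis says $\widehat D$ is a limit diagram in $\Cat_\infty$, equivalently in $\PrR$ by (d); by (b), $\Ind \circ D$ is then a colimit diagram in $\PrL_\omega$. Applying the inverse of the equivalence (c) --- i.e., taking compact objects in each presentable $\infty$-category --- yields a colimit diagram in the idempotent complete setting, which by the hypothesis that each $D(k)$ is idempotent complete agrees termwise with $D$ (and hence with $D$ as a diagram). The main subtle point is precisely this recovery step: one has to verify that taking compact objects of a filtered colimit in $\PrL_\omega$ really is the filtered colimit of the underlying idempotent complete $\infty$-categories, which is the content of the equivalence in (c) being filtered-colimit-preserving. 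The idempotent completeness assumption on $D(k)$ is used to identify the recovered diagram with $D$ itself rather than with some idempotent completion of it.
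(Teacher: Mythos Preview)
Your proposal is correct and follows essentially the same approach as the paper: both arguments hinge on the anti-equivalence $\PrL_\omega \simeq (\PrR_\omega)^\op$, the fact that $\Ind\colon\Cat_\infty^{\mathrm{rex}}\to\PrL_\omega$ preserves (filtered) colimits (\cite[Propositions~5.5.7.10 and~5.5.7.11]{HTT}), and that the forgetful functor $\PrR_\omega \to \Cat_\infty$ creates limits (\cite[Proposition~5.5.7.6]{HTT}). For the recovery step in (i) that you correctly flag as the subtle point, the paper invokes \cite[Proposition~5.5.7.8]{HTT} together with \cite[Lemma~7.3.5.10]{HA}, which supply exactly the content you describe.
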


\begin{proof}
	By \cite[Proposition 5.5.7.11]{HTT}, $D$ is a colimit diagram in $\Cat_\infty$ if and only if it is so in $\Cat_\infty^\mathrm{rex}$.
	By \cite[Proposition 5.5.7.6]{HTT}, $\widehat D$ is a limit diagram in $\Cat_\infty$ if and only if it is so in $\Pr^R_\omega$. Passing to adjoints gives an equivalence $(\Pr^R_\omega)^\op \simeq \Pr^L_\omega$ \cite[Notation 5.5.7.7]{HTT}. 
	Assertion (i) now follows from \cite[Proposition 5.5.7.8]{HTT} and \cite[Lemma 7.3.5.10]{HA}, while assertion (ii) follows from \cite[Proposition 5.5.7.10]{HTT}.
\end{proof}

Recall that $\SH(-)$ is a cdh sheaf \cite[Proposition 6.24]{hoyois-sixops} and that $\SH(X)$ is compactly generated when $X$ is qcqs \cite[Proposition C.12(1,2)]{HoyoisGLV}. Since cdh descent on $\Sch^\mathrm{qcqs}$ is equivalent to certain squares being taken to cartesian squares \cite[Proposition 2.1.5(2)]{cdarc}, it follows from Lemma~\ref{lem:cg-pullback}(i) that $\SH(-)^\omega$ is a cdh sheaf on $\Sch^\mathrm{qcqs}$. The reason for passing to compact objects is that $\SH(-)^\omega$ is also a finitary presheaf, i.e., it transforms limits of cofiltered diagrams of qcqs schemes with affine transition maps into colimits of $\infty$-categories: this follows from \cite[Proposition C.12(4)]{HoyoisGLV} and Lemma~\ref{lem:cg-filtered}(i).
Since the $\infty$-category of small $\infty$-categories is compactly generated, the presheaf $\SH(-)^\omega$ (more precisely, its right Kan extension from $\Sch^\mathrm{qcqs}$ to $\Sch$) satisfies the assumptions of \cite[Theorem 3.3.4]{cdarc} over the base $\Spec \Z$. The conclusion is that $\SH(-)^\omega$ satisfies Milnor excision if and only if it satisfies henselian v-excision. 
If $i\colon Z\hook X$ is a closed immersion of qcqs schemes, the functor $i^*\colon \SH(X)^\omega\to \SH(Z)^\omega$ has a fully faithful right adjoint, since $i_*$ preserves compact objects (by localization \cite[Proposition C.10]{HoyoisGLV}).
Hence, by Lemma~\ref{lem:cg-pullback}, $\SH(-)^\omega$ satisfies Milnor excision or henselian v-excision if and only if $\SH(-)$ does. Theorem~\ref{thm:main} is therefore reduced to the following theorem:

\begin{thm}\label{thm:v-excision}
	Let $V$ be a valuation ring and $\p\subset V$ a prime ideal. Then the following square of $\infty$-categories is cartesian:
	\[
	\begin{tikzcd}
		\SH(V) \ar{r} \ar{d} & \SH(V_\p) \ar{d} \\ \SH(V/\p) \ar{r} & \SH(\kappa(\p))\rlap.
	\end{tikzcd}
	\]
\end{thm}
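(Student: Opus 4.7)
The plan is to follow the strategy outlined in the introduction: pass to the enlarged $\infty$-category $\bigSH_\cdh(-)$ built from the cdh site, exploit a good behavior of pushforward along open immersions to reduce to a presheaf-level question, and close with a geometric input specific to valuation rings. First I set up the geometry. Let $j\colon \Spec V_\p \to \Spec V$ and $i\colon \Spec V/\p \to \Spec V$; the latter is a closed immersion while the former is only pro-open, being the cofiltered limit of the open immersions $\Spec V[1/f] \hookrightarrow \Spec V$ for $f \notin \p$. Since the primes of a valuation ring are totally ordered, $\Spec V = \Spec V_\p \cup \Spec V/\p$ as topological spaces, with scheme-theoretic intersection $\Spec \kappa(\p)$: this is the geometric content of the desired cartesian square. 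The naive closed--open localization formalism does not directly apply because $\Spec V_\p$ is not the honest open complement of $\Spec V/\p$ in $\Spec V$ (that complement is slightly smaller, missing the point $\p$ itself), and bypassing this discrepancy is the crux.

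To do so, I would embed $\SH$ into $\bigSH_\cdh$. Cisinski's cdh descent for motivic spectra yields a fully faithful inclusion $\SH(X) \hookrightarrow \bigSH_\cdh(X)$, and since fully faithful functors preserve and reflect pullback squares of $\infty$-categories, it suffices to establish cartesianness of the analogous square inside $\bigSH_\cdh$. The key input is then Lemma~\ref{lem:cdh-cocontinuous}, which asserts that pushforward along an open immersion preserves cdh-local equivalences. This rests on the cocontinuity of the underlying site morphism --- cdh-covers restrict along open immersions --- and, applied to each of the open immersions $\Spec V[1/f] \hookrightarrow \Spec V$ approximating $j$, it lets us manipulate $j_*$ at the sheaf level as if $j$ were itself an honest open immersion. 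Invoking Lemma~\ref{lem:key}, this reduces the cartesianness of the sheafified square to the cartesianness of the corresponding square of underlying presheaves on the cdh site of $V$.

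The presheaf-level cartesianness is then controlled by Lemma~\ref{lem:interval}: the total ordering of the prime spectrum of a valuation ring permits gluing sections over $V_\p$ and $V/\p$ that agree over $\kappa(\p)$ back to a section over $V$. I expect the main obstacle to be Lemma~\ref{lem:cdh-cocontinuous}: a right adjoint preserving local equivalences is atypical and demands a genuine cocontinuity input specific to the open-immersion setting in the cdh topology --- in particular, one must verify that the restriction of cdh-covers (Nisnevich covers and abstract blowup squares) along an open immersion is again a cdh-cover, and that this property passes to the pro-open limit defining $j$. Once that is in place, the remaining reductions to presheaves and the final geometric verification on valuation rings should be essentially formal.
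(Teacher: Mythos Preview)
Your proposal names the right lemmas but has two genuine gaps.

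First, you miss the reduction to valuation rings of finite rank. The paper uses that $\SH(-)^\omega$ is finitary to reduce Theorem~\ref{thm:v-excision} to this case; then $j\colon \Spec V_\p \hookrightarrow \Spec V$ is an honest open immersion $u\colon U\hookrightarrow X$, and no pro-system of $\Spec V[1/f]$ is needed. Your plan to handle the pro-open case directly would require passing Lemma~\ref{lem:cdh-cocontinuous} through a cofiltered limit of open immersions, which you do not justify and which is not how the paper proceeds.

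Second, the step ``fully faithful functors preserve and reflect pullback squares, so it suffices to prove cartesianness in $\bigSH_\cdh$'' is not valid as stated: full faithfulness of $\SH\hookrightarrow\bigSH_\cdh$ gives that the comparison $\SH(X)\to\SH(Y)\times_{\SH(W)}\SH(Z)$ is fully faithful once the $\bigSH_\cdh$-square is cartesian, but not essentially surjective --- you would still need that an object of $\bigSH_\cdh(X)$ whose pullbacks lie in $\SH$ already lies in $\SH(X)$. The paper does not argue this way. Instead, after the finite-rank reduction it sets $T=X\setminus Z\subsetneq U$ and reformulates cartesianness (via localization and proper base change) as the assertion that the transformation $u_! t_! \to u_* t_!\colon \SH(T)\to\SH(X)$ is an equivalence. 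It then observes that both $u_!$ and $u_*$ on $\SH$ factor as the embedding into $\bigSH_\cdh$, the corresponding functor there, and the right adjoint back, so it suffices to check $u_!t_!\simeq u_*t_!$ on $\bigSH_\cdh$. Lemmas~\ref{lem:key} and~\ref{lem:cdh-cocontinuous} are precisely statements about this transformation (on presheaves, and its compatibility with cdh/motivic localization), not about a cartesian square of presheaf categories; your description of how they are used does not match what they actually say.
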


Moreover, $\SH(-)^\omega$ being finitary, it is enough to prove Theorem~\ref{thm:v-excision} for $V$ a valuation ring of finite rank \cite[Remark 3.3.3]{cdarc}. In this case, $\Spec V_\p\to \Spec V$ is an open immersion.

\section{v-excision in terms of the six operations}
\label{sec:six-op}

In this section, we reduce the finite rank case of Theorem~\ref{thm:v-excision} to a statement involving the formalism of six operations for motivic spectra. In fact, this reduction only uses that $\SH(-)$ is a presheaf of presentable $\infty$-categories satisfying the localization property, meaning that for $i\colon Z\hook X$ a closed immersion with open complement $j\colon U\hook X$, the functors $i_*$ and $j_*$ are fully faithful and the image of $i_*$ coincides with the kernel of $j^*$ \cite[Proposition C.10]{HoyoisGLV}.

We begin by examining under which conditions $\SH(-)$ sends a square to a cartesian square.
A commutative square of schemes
\[
\begin{tikzcd}
	W \ar{r}{k} \ar{d}[swap]{g} \ar{dr}{l} & Y \ar{d}{f} \\ Z \ar{r}{h} & X
\end{tikzcd}
\]
induces an adjunction
\begin{equation}\label{eqn:adjunction}
\SH(X) \rightleftarrows \SH(Y)\times_{\SH(W)} \SH(Z).
\end{equation}
This adjunction is an equivalence if and only if the left adjoint is conservative and the right adjoint is fully faithful, in other words if and only if:
\begin{enumerate}
	\item the functor $(f^*,h^*)\colon \SH(X)\to \SH(Y)\times \SH(Z)$ is conservative;
	\item given $E_{Y}\in \SH(Y)$, $E_{Z}\in \SH(Z)$, $E_W\in\SH(W)$, $k^*E_Y \simeq E_W$, and $g^*E_Z\simeq E_W$, if $E=f_*(E_Y) \times_{l_* E_W} h_*(E_Z)$, then the canonical maps
	\[
	f^*(E) \to E_Y \quad\text{and}\quad h^*(E) \to E_Z
	\]
	are equivalences.
\end{enumerate}
If $f$ is an immersion, then $f_*$ is fully faithful and hence $f^*(E) \simeq E_Y \times_{k_*g^* E_Z} f^*h_*E_Z$. It follows that $f^*(E)\to E_Y$ is an equivalence if and only if the exchange morphism $f^* h_*(E_Z) \to k_* g^*(E_Z)$ is an equivalence.
Thus, if $f$, $g$, $h$, and $k$ are all immersions (so that $k^*$ and $g^*$ are essentially surjective), then (ii) holds if and only if the following exchange transformations are equivalences:
\begin{gather*}
f^* h_* \to k_* g^* \colon \SH(Z) \to \SH(Y), \\
h^* f_* \to g_* k^* \colon \SH(Y) \to \SH(Z).
\end{gather*}

\begin{rem}
	In the adjunction~\eqref{eqn:adjunction}, the left adjoint functor is fully faithful if and only if, for all $E\in\SH(X)$, $E(-)$ converts every smooth base change of the given square to a cartesian square.
	For Milnor squares (which are preserved by smooth base change \cite[Lemma~3.2.9]{cdarc}), this is precisely the content of Corollary~\ref{cor:main}. 
	For abstract blowup squares, this was first proved by Cisinski in \cite[Proposition~3.7]{Cisinski}. The stronger statement that $\SH(-)$ itself sends abstract blowup squares to cartesian squares was proved in \cite[Proposition~6.24]{hoyois-sixops} by verifying conditions (i) and (ii) above.
\end{rem}

Now let $V$ be a valuation ring of finite rank and $\p\subset V$ a prime ideal. Set $X=\Spec V$, $U=\Spec V_\p$, $Z=\Spec V/\p$, and $T=X-Z$. We have a commutative diagram
\[
\begin{tikzcd}
	& U\cap Z \ar{r}{v} \ar{d}{k} & Z \ar{d}{i} \\
	T \ar{r}{t} & U \ar{r}{u} & X
\end{tikzcd}
\]
where the horizontal maps are open immersions and the vertical maps are closed immersions. Since $U$ and $Z$ cover $X$, the functor $\SH(X)\to \SH(U)\times \SH(Z)$ is conservative (by localization). Specializing the above discussion to this situation, we see that Theorem~\ref{thm:v-excision} holds if and only if the base change transformation
\[
i^*u_* \to v_* k^*\colon \SH(U) \to \SH(Z)
\]
is an equivalence (the other transformation $u^*i_*\to k_* v^*$ being an equivalence by the localization sequences for $i$ and $k$). This transformation induces the rightmost morphism in the diagram of localization sequences
\[
\begin{tikzcd}
	\llap{$u_!t_!t^*\simeq{}$}(ut)_!(ut)^* u_* \ar{d} \ar{r} & u_* \ar[equal]{d} \ar{r} & i_*i^*u_* \ar{d} \\
	u_*t_!t^* \ar{r} & u_* \ar{r} & u_*k_*k^*\rlap{${}\simeq i_* v_* k^*$.}
\end{tikzcd}
\]
Since $i_*$ is fully faithful and $t^*$ is surjective, we deduce that Theorem~\ref{thm:v-excision} holds for $\p\subset V$ if and only if the canonical transformation
\begin{equation}\label{eqn:SH-transformation}
u_!t_! \to u_* t_!\colon \SH(T) \to \SH(X)
\end{equation}
is an equivalence.

\section{Proof of v-excision via cdh descent}

We now prove that the transformation~\eqref{eqn:SH-transformation} is an equivalence and thereby the main theorem of the paper. As explained in the introduction, the key insight is to work with the cdh topology. 

Let $\bigH_\cdh(X)$ and $\bigSH_\cdh(X)$ be the analogues of $\H(X)$ and $\SH(X)$ constructed using the cdh site $\Sch_X^\mathrm{lfp}$ instead of the Nisnevich site $\Sm_X$. The inclusion $\Sm_X\subset \Sch_X^\mathrm{lfp}$ induces left adjoint functors $\H(X)\to \bigH_\cdh(X)$ and $\SH(X) \to\bigSH_\cdh(X)$, and the fact that $\SH(-)$ satisfies cdh descent implies that the latter is fully faithful \cite{shcdh}. For $f\colon Y\to X$ any morphism, we have commutative squares
\[
\begin{tikzcd}
	\SH(X) \ar{r}{f^*} \ar[hook]{d} & \SH(Y) \ar[hook]{d} \\
	\bigSH_\cdh(X) \ar{r}{f^*} & \bigSH_\cdh(Y)
\end{tikzcd}
\quad
\begin{tikzcd}
	\SH(X) & \SH(Y) \ar{l}[swap]{f_*} \\
	\bigSH_\cdh(X) \ar{u} & \bigSH_\cdh(Y)\rlap. \ar{u} \ar{l}[swap]{f_*}
\end{tikzcd}
\]
If $f\colon Y\to X$ is smooth, we moreover have a commutative square
\[
\begin{tikzcd}
	\SH(Y) \ar{r}{f_\sharp} \ar[hook]{d} & \SH(X) \ar[hook]{d} \\
	\bigSH_\cdh(Y) \ar{r}{f_\sharp} & \bigSH_\cdh(X)\rlap.
\end{tikzcd}
\]
Hence, for $u\colon U\hook X$ an open immersion, we have factorizations
\[
\begin{tikzcd}
	\SH(U) \ar{r}{u_!} \ar[hook]{d} & \SH(X) \\
	\bigSH_\cdh(U) \ar{r}{u_!} & \bigSH_\cdh(X) \ar{u}
\end{tikzcd}
\quad
\begin{tikzcd}
	\SH(U) \ar{r}{u_*} \ar[hook]{d} & \SH(X) \\
	\bigSH_\cdh(U) \ar{r}{u_*} & \bigSH_\cdh(X)\rlap. \ar{u}
\end{tikzcd}
\]
Thus, to show that~\eqref{eqn:SH-transformation} is an equivalence, it suffices to show that the natural transformation
\begin{equation*}\label{eqn:SHcdh-transformation}
u_!t_! \to u_* t_!\colon \bigSH_\cdh(T) \to \bigSH_\cdh(X)
\end{equation*}
is an equivalence, which we do in Proposition~\ref{prop:!*} below. The following three lemmas are the heart of the proof.

\begin{lem}\label{lem:interval}
	Let $V$ be a valuation ring and $X$ a connected $V$-scheme. Then the image of $X\to\Spec V$ is an interval in the specialization poset.
\end{lem}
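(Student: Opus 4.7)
My approach has two main components: a valuation-theoretic fact about maps between valuation rings, followed by a geometric gluing via irreducible components.

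First, I would establish the following key claim: for any ring homomorphism $\phi\colon V \to R$ between valuation rings, the image of $\Spec R \to \Spec V$ is the interval $[\ker\phi, \phi^{-1}(\m_R)]$ in the specialization order. After reducing to the case where $\phi$ is injective (by replacing $V$ with $V/\ker\phi$), for each prime $\q$ of $V$ with $\q \subseteq \phi^{-1}(\m_R)$, I would set $\mathfrak{c}_\q := \sqrt{\phi(\q)R}$, the smallest prime of $R$ containing $\phi(\q)$, and prove $\phi^{-1}(\mathfrak{c}_\q) = \q$ by a valuation-theoretic argument: if some $t \in V \setminus \q$ satisfied $\phi(t) \in \mathfrak{c}_\q$, then since $v_V(t) \in \Delta_\q$ while $v_V(s) > \Delta_\q$ for $s \in \q$, every quotient $s/t^N$ remains in $\q$, so $\phi(s/t^N) \in \mathfrak{c}_\q$ for all $N$; this forces $\phi(\q)$ into the prime of $R$ associated to the convex subgroup generated by $\Delta_{\mathfrak{c}_\q}$ together with $v_R(\phi(t))$---a strictly smaller prime than $\mathfrak{c}_\q$, contradicting minimality. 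This valuation-theoretic bookkeeping with convex subgroups is where I expect the main technical difficulty to lie.

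Second, I would apply the key claim to each irreducible component $X_i$ of $X$ (with generic point $\xi_i$ and $\p_i := f(\xi_i)$). For any $x \in X_i$, Chevalley's extension theorem provides a valuation ring $R$ of $\kappa(\xi_i)$ dominating the local ring $\mathcal{O}_{X_i, x}$. The composite $V \to \mathcal{O}_{X_i, x} \hookrightarrow R$ is a map of valuation rings with kernel $\p_i$ and $\phi^{-1}(\m_R) = f(x)$, so the first step shows the image of $\Spec R \to \Spec V$ equals the interval $[\p_i, f(x)]$, which lies in $f(X_i)$ because $\Spec R \to X$ factors through $X_i$. Varying $x$ over $X_i$, the set $f(X_i)$ is itself an interval of the form $[\p_i, \sup_{x \in X_i} f(x)]$.

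Finally, I would use connectedness of $X$ to assemble the pieces. Since $X$ is connected, the intersection graph of its irreducible components is connected: given $\p_1 \subsetneq \p_2$ in $f(X)$ witnessed by $x_k \in X_{i_k}$, fix a chain of components $X_{i_0}, X_{i_1}, \ldots, X_{i_n}$ with $X_{i_0} \ni x_1$, $X_{i_n} \ni x_2$, and $X_{i_\ell} \cap X_{i_{\ell+1}} \ne \varnothing$. A common point $z_\ell \in X_{i_\ell} \cap X_{i_{\ell+1}}$ satisfies $f(z_\ell) \in f(X_{i_\ell}) \cap f(X_{i_{\ell+1}})$, so the successive intervals $f(X_{i_\ell})$ overlap in the totally ordered $\Spec V$. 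A union of pairwise overlapping intervals in a totally ordered set is itself an interval, hence $\bigcup_\ell f(X_{i_\ell}) \subseteq f(X)$ is an interval containing both $\p_1$ and $\p_2$, so it contains all of $[\p_1, \p_2]$.
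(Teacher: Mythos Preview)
Your approach is genuinely different from the paper's and considerably more elaborate. The paper gives a two-line argument: if the fiber $X_\p$ is empty for some $\p\in\Spec V$, then pulling back the Milnor square for $V_\p$ and $V/\p$ to $X_\red$ (using that Milnor squares are stable under base change to reduced schemes) exhibits $X_\red$ as the disjoint union of its restrictions to $\Spec V_\p$ and $\Spec V/\p$; connectedness then forces the image to lie entirely on one side of $\p$. Equivalently, at the level of underlying sets: $X_U$ is open, $X_Z$ is closed, $X_U\cup X_Z = X$, and $X_U\cap X_Z = X_\p$; if the latter is empty then $X_U$ is clopen. This bypasses valuation-theoretic computations and irreducible components entirely.

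Your first two steps are correct (the convex-subgroup argument in step~1, once unpacked, does work: from $s/t^N\in\q$ for all $N$ one gets $v_R(\phi(s))\ge N\,v_R(\phi(t))$ for all $N$, so $\phi(\q)$ lies in the prime corresponding to the convex subgroup generated by $v_R(\phi(t))$, contradicting minimality of $\mathfrak{c}_\q$). However, step~3 contains a genuine gap: the assertion that connectedness of $X$ implies connectedness of the intersection graph of irreducible components is not justified and is not known to hold for arbitrary schemes. The equivalence classes of the relation ``linked by a finite chain of components'' are closed, but you would need them to be open as well, and for schemes that are not locally noetherian (as most schemes over a valuation ring are) this can fail without additional argument. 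Since the lemma is stated for an arbitrary connected $V$-scheme, this is a real issue. The paper's direct argument avoids this entirely, and in fact shows that your whole detour through irreducible components is unnecessary: the interval property follows immediately from the clopen decomposition above.
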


\begin{proof}
	This follows from \cite[Lemma 3.2.9]{cdarc}, which says that Milnor squares are preserved by pullback to a reduced scheme: if the fiber over $\p\subset V$ is empty, then $X_\red$ is the sum of its restrictions to $V_\p$ and $V/\p$.
\end{proof}

In the following lemmas, $\PSh_\emptyset\subset\PSh$ denotes the full subcategory of presheaves that send the initial object to the terminal object, and $\PSh_\Sigma\subset\PSh_\emptyset$ is the full subcategory of presheaves that transform finite sums into finite products.

\begin{lem}\label{lem:key}
	Let $V$ be a valuation ring of finite rank, let $X=\Spec V$, and let $T\xrightarrow t U\xrightarrow u X$ be open immersions with $T\neq U$.
	Let $\sC$ be a pointed $\infty$-category with finite products. Then the natural transformation
	\[
	u_!t_! \to u_* t_!\colon \PSh_\Sigma(\Sch_T^\fp,\sC) \to \PSh_\Sigma(\Sch_X^\fp,\sC)
	\]
	is an equivalence.
\end{lem}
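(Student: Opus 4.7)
The plan is to verify the natural transformation at each $Y \in \Sch_X^\fp$. Since both $u_!t_!F$ and $u_*t_!F$ lie in $\PSh_\Sigma$, I reduce (by decomposing $Y$ into its connected components) to showing $(u_!t_!F)(Y) \to (u_*t_!F)(Y)$ is an equivalence for connected $Y$. First I would record explicit formulas: for an open immersion $v\colon V' \hook X'$ and $G \in \PSh_\Sigma(\Sch_{V'}^\fp,\sC)$, the right adjoint is $v_*G(Y') = G(Y' \times_{X'} V')$ (from the categorical adjunction $\Sch_{V'}^\fp \rightleftarrows \Sch_{X'}^\fp$ whose right adjoint is $Y' \mapsto Y' \times_{X'} V'$), while the left adjoint on a connected $Y' \in \Sch_{X'}^\fp$ is $v_!G(Y') = G(Y')$ if $Y' \to X'$ factors through $v$ and $v_!G(Y') = \ast$ otherwise (the comma category indexing the Kan extension is either empty, giving the empty colimit $\ast$ in pointed $\sC$, or has $\id_{Y'}$ as a terminal object, giving $G(Y')$).

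Next, since $V$ has finite rank, $\Spec V$ is a finite chain $\p_0 \subsetneq \cdots \subsetneq \p_r$, and I may write $T = \Spec V_{\p_j}$ and $U = \Spec V_{\p_k}$ with $j < k$. By Lemma~\ref{lem:interval}, a connected $Y$ has image an interval $[\p_a,\p_b]$ in $X$, and I case-split on the position of this interval. The easy cases are: $b \leq j$ (image in $T$), where both sides equal $F(Y/T)$; $j < b \leq k$ (image in $U$ but not in $T$), where both sides are $\ast$ (since $Y \times_X U = Y$ and $Y \to U$ does not factor through $T$); and $a > k$ (image disjoint from $U$), where $Y \times_X U = \emptyset$ and both sides are $\ast$ using $F(\emptyset) = \ast$.

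The remaining case $a \leq k < b$, where the image of $Y$ straddles the closed set $X \setminus U$, is the main obstacle. Here $(u_!t_!F)(Y) = \ast$, and I must show $(u_*t_!F)(Y) = t_!F(Y \times_X U) = \ast$. Writing $Y \times_X U = \bigsqcup_\beta W_\beta$ in connected components and applying $\Sigma$, it suffices to show $t_!F(W_\beta) = \ast$ for each $\beta$; I would prove the stronger geometric claim that the image of each $W_\beta$ in $X$ contains $\p_k$, so that its image is not contained in $T$ (since $\p_k \notin T$). For this: $W_\beta$ is open in $Y$ but proper (as $f^{-1}(X \setminus U) \neq \emptyset$ in this case), and $Y$ is connected, so $W_\beta$ cannot be closed in $Y$, giving $\overline{W_\beta}^Y \setminus W_\beta \neq \emptyset$; since $W_\beta$ is closed in $f^{-1}(U)$ as a connected component, any such point lies in $f^{-1}(X \setminus U)$. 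Picking $y \in W_\beta$ that specializes to some $y' \in \overline{W_\beta}^Y \setminus W_\beta$, I apply Lemma~\ref{lem:interval} to the irreducible closed subscheme $\overline{\{y\}}^Y$ (connected because irreducible): its image in $X$ is an interval containing $f(y) \in U$ and $f(y') \in X \setminus U$, hence also $\p_k$. A point $y_k \in \overline{\{y\}}^Y$ with $f(y_k) = \p_k$ then lies in $f^{-1}(U)$ and specializes from $y \in W_\beta$, so closedness of $W_\beta$ in $f^{-1}(U)$ forces $y_k \in W_\beta$, exhibiting $\p_k$ in the image of $W_\beta$.
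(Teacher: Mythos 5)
Your proof is correct and follows essentially the same route as the paper: reduce to connected $Y$ using $\Sigma$-compatibility, compute $u_!t_!$ and $u_*t_!$ on a connected $Y$ via the standard Kan-extension formulas for an open immersion, and invoke Lemma~\ref{lem:interval} to compare the results. The only difference is one of detail: where the paper simply asserts that $(u_*t_!\sF)(Y)=*$ whenever $Y_T\neq Y_U$, you supply the justification in the straddling case (no connected component of $Y_U$ maps into $T$, proved by applying the interval lemma to closures of points and using that components of $Y_U$ are closed in $Y_U$), which is precisely the step the paper leaves implicit.
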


\begin{proof}
	Since $V$ has finite rank, every scheme in $\Sch_X^\fp$ has finitely many generic points and in particular is a finite sum of connected schemes. It therefore suffices to show that
	\[
	(u_!t_!\sF)(Y) \simeq (u_* t_!\sF)(Y)
	\]
	for every connected $X$-scheme $Y$. We have
	\begin{align*}
	(u_!t_!\sF)(Y) &= \begin{cases}
		* & \text{if $Y_T\neq Y$,} \\
		\sF(Y_T) & \text{otherwise,}
	\end{cases}
	\\
	(u_*t_!\sF)(Y) &= \begin{cases}
		* & \text{if $Y_T\neq Y_U$,} \\
		\sF(Y_T) & \text{otherwise.}
	\end{cases}
	\end{align*}
	These obviously agree if $Y_T=\emptyset$, since $\sF(\emptyset)=*$, so we may assume $Y_T\neq\emptyset$.
	In this case, since $T\neq U$ and the image of $Y\to X$ is an interval (Lemma~\ref{lem:interval}), $Y_T\neq Y$ if and only if $Y_T\neq Y_U$.
\end{proof}

\begin{lem}\label{lem:cdh-cocontinuous}
	Let $u\colon U\hook X$ be an open immersion between qcqs schemes. Then the functors
	\begin{gather*}
	u_*\colon \PSh_\emptyset(\Sch_U^\fp) \to \PSh_\emptyset(\Sch_X^\fp) \\
	u_*\colon \PSh_\emptyset(\Sch_U^\fp)_* \to \PSh_\emptyset(\Sch_X^\fp)_*
	\end{gather*}
	preserve cdh-local equivalences and motivic equivalences (i.e., morphisms that become equivalences in $\bigH_\cdh$).
\end{lem}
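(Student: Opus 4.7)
The starting point is the explicit formula $(u_*\sF)(Y) = \sF(Y\times_X U)$ for every $\sF\in\PSh_\emptyset(\Sch_U^\fp)$ and $Y\in\Sch_X^\fp$, which identifies $u_*$ with precomposition by the base-change functor $q\colon\Sch_X^\fp\to\Sch_U^\fp$, $Y\mapsto Y_U$. This formula comes from computing $u_*$ as the right Kan extension along $u\colon\Sch_U^\fp\hookrightarrow\Sch_X^\fp$: since $U\hookrightarrow X$ is a monomorphism, the universal property of the fiber product identifies the comma category $(u\downarrow Y)$ with the slice $\Sch_U^\fp/Y_U$, which has $(Y_U,\id)$ as its terminal object. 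Being a precomposition, $u_*$ preserves all small limits and colimits. Moreover, since $q$ carries Nisnevich squares, abstract blowup squares, and $\bA^1$-projections to data of the same kind, $u_*$ sends $\PSh_\emptyset$ to itself, preserves cdh sheaves, and preserves motivic (i.e.\ $\bA^1$-invariant cdh) sheaves.

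The main step is to show that $u_*$ commutes with cdh sheafification $L_\cdh$. The key observation is a cofinality statement: given any $Y\in\Sch_X^\fp$ and any cdh cover $\{V_i\to Y_U\}$ in $\Sch_U^\fp$, the family $\{V_i\to Y\}\cup\{Z\to Y\}$, where $Z$ is a closed subscheme of $Y$ supported on the complement $Y\setminus Y_U$, is a cdh cover of $Y$ in $\Sch_X^\fp$, obtained as the composition of the open-closed cdh cover $\{Y_U\to Y,\,Z\to Y\}$ with the refinement $\{V_i\}$ of the open part; its base change to $U$ recovers the original cover. Hence the map from cdh covers of $Y$ to cdh covers of $Y_U$ is cofinal; combined with the evident compatibility $(u_*\sF)(V_i\times_Y V_j) = \sF((V_i)_U\times_{Y_U}(V_j)_U)$, this implies that the \v{C}ech plus construction commutes with $u_*$. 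Iterating transfinitely yields $L_\cdh\circ u_*\simeq u_*\circ L_\cdh$. The same argument, carried out with the motivic localization in place of cdh sheafification (using $(Y\times\bA^1)_U=Y_U\times\bA^1$), handles motivic equivalences.

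With this commutation in hand, preservation of local equivalences is immediate: for any cdh-local (resp.\ motivic) equivalence $f$, the map $L_\cdh(u_*f)\simeq u_*(L_\cdh f)$ is an equivalence because $L_\cdh f$ is. The pointed version is identical, since $u_*$ commutes with adding a basepoint. The main obstacle is the cofinality of cdh covers under pullback along an open immersion; this is exactly where the geometric hypothesis enters, via the fact that an open immersion has a closed complement with which it forms an open-closed (hence cdh) cover.
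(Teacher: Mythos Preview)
Your cofinality argument has a genuine gap: the claim that $\{Y_U\to Y,\;Z\to Y\}$ is a cdh cover is false. Open--closed decompositions are \emph{not} cdh covers in general. Concretely, take $Y=\Spec V$ with $V$ a henselian valuation ring of positive rank, $Y_U$ a nonempty proper open, and $Z$ the reduced closed complement. The identity $\Spec V\to Y$ factors through neither $Y_U$ (the closed point does not lie over $U$) nor $Z$ (the generic point does), so this family is not jointly surjective on henselian-valuation-ring points, and hence not cdh covering. Equivalently, the cdh cd-structure is built from Nisnevich squares and abstract blowup squares, and $\{Y_U,Z\}$ fits neither pattern: $Y_U\to Y$ is not proper and $Z\to Y$ is not \'etale. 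Consequently the sieve $q^*(T)$ you produce from a cdh covering sieve $T$ on $Y_U$ need not be cdh covering on $Y$, and the map from cdh covers of $Y$ to cdh covers of $Y_U$ is not cofinal; the \v{C}ech/plus-construction comparison collapses.

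The paper's argument bypasses this obstruction entirely. Rather than attempting to lift covers, it identifies the cdh-local equivalences in $\PSh_\emptyset$ as the saturation (under $2$-out-of-$3$ and weakly contractible colimits, both of which $u_*$ preserves) of the nonempty cdh covering sieves $R\hookrightarrow Y$, and then shows directly that $u_*R\hookrightarrow u_*h_Y$ is a cdh-local equivalence by checking surjectivity on stalks. The geometric input is different from yours: it is the fact (\cite[Lemma 3.3.5]{cdarc}) that for a henselian valuation ring $A$ over $X$, the pullback $(\Spec A)_U$ is either empty or again the spectrum of a henselian valuation ring, so that $R((\Spec A)_U)\to h_Y((\Spec A)_U)$ is already a bijection. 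This is precisely what allows the argument to succeed where naive cover-lifting fails.
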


\begin{proof}
	The functor $u_*\colon \PSh_\emptyset(\Sch_U^\fp) \to \PSh_\emptyset(\Sch_X^\fp)$ preserves colimits indexed by weakly contractible $\infty$-categories, since the inclusion $\PSh_\emptyset\subset\PSh$ does.
	If $L_\emptyset\colon \PSh(\Sch_U^\fp)\to\PSh_\emptyset(\Sch_U^\fp)$ is the left adjoint to the inclusion, then
	\[
	(L_\emptyset\sF)(Y)=\begin{cases} * & \text{if $Y=\emptyset$,} \\ \sF(Y) & \text{otherwise.} \end{cases}
	\]
	In particular, if $Y\in\Sch_U^\fp$ and $i\colon R\hook Y$ is a sieve, then $L_\emptyset(i)=i$ unless the sieve is empty, in which case $L_\emptyset(i)$ is the sieve on $Y$ generated by the empty scheme.
	The collection of cdh-local equivalences in $\PSh_\emptyset(\Sch_U^\fp)$ is therefore generated under 2-out-of-3 and colimits by nonempty cdh sieves, and the collection of motivic equivalences is similarly generated by cdh-local equivalences and $\A^1$-homotopy equivalences. The same collections are generated using only 2-out-of-3 and weakly contractible colimits, because the initial object of $\Fun(\Delta^1,\PSh_\emptyset(\Sch_U^\fp))$ is a cdh sieve and the colimit of any diagram $\sK\to \sC$ is the same as the colimit of an extension $\sK^\triangleleft\to\sC$ sending the cone point to an initial object.
	Since $u_*$ preserves $\A^1$-homotopic maps, it remains to show that for every nonempty cdh sieve $R\hook Y$ in $\Sch_U^\fp$, $u_*(R)\hook u_*(Y)$ is a cdh-local equivalence. Since it is a monomorphism, it suffices to check that it is surjective on stalks. If $A$ is a henselian valuation ring and $\Spec A\to u_*(Y)$ is a morphism, then $(\Spec A)_U$ is either empty or the spectrum of a henselian valuation ring \cite[Lemma 3.3.5]{cdarc}. In both cases, the map $(\Spec A)_U \to Y$ factors through $R$.
	
	The functor $u_*\colon \PSh_\emptyset(\Sch_U^\fp)_* \to \PSh_\emptyset(\Sch_X^\fp)_*$ preserves colimits, so as before it suffices to show that $u_*L_\emptyset(R_+) \hook u_* L_\emptyset(Y_+)$ is a cdh-local equivalence for every cdh sieve $R\hook Y$. 
	 Since it is a monomorphism, this can be checked on stalks as above.
\end{proof}

\begin{rem}\label{rem:etale-pushforward}
	If $u\colon U\to X$ is an étale morphism between qcqs schemes, the conclusions of Lemma~\ref{lem:cdh-cocontinuous} hold if one replaces $\PSh_\emptyset$ with $\PSh_\Sigma$. Indeed, if $V$ is a henselian valuation ring and $X\to \Spec V$ is a quasi-compact étale morphism, then $X$ is the spectrum of a finite product of henselian valuation rings.
\end{rem}

\begin{rem}
	Lemma~\ref{lem:cdh-cocontinuous} (but not Remark~\ref{rem:etale-pushforward}) also holds for the rh topology, whose points are valuation rings.
\end{rem}

\begin{prop}\label{prop:!*}
	Let $V$ be a valuation ring of finite rank, let $X=\Spec V$, and let $T\xrightarrow t U\xrightarrow u X$ be open immersions with $T\neq U$. Then the natural transformations
	\begin{gather*}
	u_!t_! \to u_* t_!\colon \bigH_\cdh(T)_* \to \bigH_\cdh(X)_* \\
	u_!t_! \to u_* t_!\colon \bigSH_\cdh(T) \to \bigSH_\cdh(X)
	\end{gather*}
	are equivalences.
\end{prop}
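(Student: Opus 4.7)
The plan is to reduce both claims to the presheaf-level identity $u_!t_! \simeq u_*t_!$ provided by Lemma~\ref{lem:key}, applied with $\sC=\Spc_*$ for the first transformation and $\sC=\Spt$ for the second.

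First I would identify each of the motivic-level functors $u_!$, $u_*$, and $t_!$ with the motivic localization of its presheaf-level counterpart. For the right adjoint $u_*$ this is precisely Lemma~\ref{lem:cdh-cocontinuous}: the presheaf-level $u_*$ preserves motivic equivalences in $\PSh_\emptyset(-)_*$, so it descends to $\bigH_\cdh(-)_*$, and the universal property of $u^* \dashv u_*$ forces this descent to be the motivic $u_*$. For the left adjoints $u_!$ and $t_!$, which are left Kan extensions along the fully faithful inclusions of sites $\Sch_U^\fp \hookrightarrow \Sch_X^\fp$ and $\Sch_T^\fp \hookrightarrow \Sch_U^\fp$, I observe that $\A^1$-projections and cdh sieves on the smaller site are sent to objects of the same kind on the larger site; hence the presheaf-level $u_!$ and $t_!$ preserve motivic equivalences and their motivic localizations give the motivic left adjoints.

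Second, given $\sF \in \bigH_\cdh(T)_*$, represent it by a motivically local presheaf in $\PSh_\emptyset(\Sch_T^\fp)_*$. Since cdh sheaves transform finite sums into finite products, $\sF$ lies in $\PSh_\Sigma(\Sch_T^\fp, \Spc_*)$, and Lemma~\ref{lem:key} supplies an equivalence of presheaves $u_!t_!\sF \to u_*t_!\sF$ on $\Sch_X^\fp$. Motivically localizing both sides, and invoking the first step to identify the result with the motivic transformation $u_!t_!\sF \to u_*t_!\sF$, yields the equivalence in $\bigH_\cdh(X)_*$. The spectral case is handled identically with $\sC=\Spt$: a motivic spectrum over $T$ is represented by a cdh-local, $\A^1$-invariant $\T$-spectrum of presheaves lying in $\PSh_\Sigma(\Sch_T^\fp, \Spt)$, and one applies Lemma~\ref{lem:key} either levelwise or directly with $\sC=\Spt$.

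The main obstacle is the first step: verifying that the motivic-level left adjoints $u_!$ and $t_!$ coincide with the motivic localizations of their presheaf-level analogues. Lemma~\ref{lem:cdh-cocontinuous} disposes of the right-adjoint case cleanly, while the left-adjoint case reduces to checking preservation of a generating set of motivic equivalences under left Kan extension along a fully faithful inclusion of sites, which is straightforward. Once this compatibility is secured, the proposition follows immediately from Lemma~\ref{lem:key}.
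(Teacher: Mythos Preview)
Your argument for the unstable transformation is correct and is exactly what the paper does: combine Lemma~\ref{lem:key} with Lemma~\ref{lem:cdh-cocontinuous} to identify the presheaf-level and motivic-level functors.

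For the stable transformation there is a gap. The suggestion to take $\sC=\Spt$ and view a motivic spectrum as an object of $\PSh_\Sigma(\Sch_T^\fp,\Spt)$ conflates $S^1$-stabilization with $\P^1$-stabilization: $\bigSH_\cdh(T)$ is the $\P^1$-stable category, not a localization of presheaves of spectra, so Lemma~\ref{lem:key} with $\sC=\Spt$ does not apply directly. Your ``levelwise'' alternative is the right move and is what the paper does, but it is incomplete as stated. Applying the unstable equivalence level by level gives $u_!t_!\simeq u_*t_!$ only on $\P^1$-\emph{prespectra}; to conclude in $\bigSH_\cdh$ you must still argue that the levelwise $u_!$ and $u_*$ commute with spectrification. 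The paper supplies this missing step explicitly: $u_!$ commutes with spectrification because its right adjoint $u^*$ preserves $\P^1$-spectra among prespectra, and $u_*$ commutes with spectrification because it commutes with $\P^1$-loops and with filtered colimits. Once you add this observation, your levelwise argument goes through.
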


\begin{proof}
	The first equivalence follows directly from Lemmas~\ref{lem:key} and \ref{lem:cdh-cocontinuous}. 
	The functors $u_!$ and $u_*$ extend to functors between the $\infty$-categories of $\P^1$-prespectra, which are computed levelwise.
	The second equivalence follows from the first since both $u_!$ and $u_*$ commute with spectrification (the former because $u^*$ preserves $\P^1$-spectra among $\P^1$-prespectra, and the latter because $u_*$ commutes with $\P^1$-loops and filtered colimits).
\end{proof}

This completes the proof of Theorem~\ref{thm:v-excision}, hence of Theorem~\ref{thm:main}.

\begin{rem}
	Let $S$ be a scheme and $E\in \mathrm{Alg}(\SH(S))$ a motivic ring spectrum over $S$. It follows formally from Theorem~\ref{thm:main} that the presheaf of $\infty$-categories $\mathrm{Mod}_E(\SH(-))\colon \Sch_S^\op \to\Cat_\infty$ satisfies Milnor excision. For example, the $\infty$-category of Beilinson motives \cite[\sectsign 14.2]{CD}, the $\infty$-category of Spitzweck motives \cite[Chapter 9]{SpitzweckHZ}, and the $\infty$-category of motivic spectra with finite syntomic transfers \cite[\sectsign 4.1]{deloop3} satisfy Milnor excision.
\end{rem}

\section{Milnor excision for étale motives}

We conclude this article with a proof of Milnor excision for Ayoub's étale motives.
For $X$ a scheme and $\Lambda$ a commutative ring, let $\DA^\et(X,\Lambda)$ be the $\infty$-category of étale motives constructed in \cite[\sectsign 3]{AyoubEtale}, and let $\DA^\et_\ct(X,\Lambda)\subset \DA^\et(X,\Lambda)$ be the subcategory of constructible objects (in the sense of \cite[Définition 8.1]{AyoubEtale})\footnote{A more natural subcategory is that of locally constructible objects in the sense of \cite[Definition 6.3.1]{CDetale}. We will only use $\DA^\et_\ct$ in cases where the two subcategories coincide.}.
Let $\mathbf D^\et(X,\Lambda)$ be the derived $\infty$-category of the abelian category of étale sheaves of $\Lambda$-modules on $X$ (equivalently, the $\infty$-category of étale hypersheaves of $\Lambda$-module spectra), and let $\mathbf D^\et_\ct(X,\Lambda)\subset\mathbf D^\et(X,\Lambda)$ be the subcategory of constructible objects (in the sense of \cite[Definition 6.3.1]{proetale}).
We denote by $H\Lambda\in \SH(X)$ Spitzweck's motivic cohomology spectrum with coefficients in $\Lambda$ \cite[\sectsign 4]{Hoyois:2018aa}.

We shall say that $X$ is \emph{$\Lambda$-finite} if it has finite Krull dimension and
\[\sup_{x\in X, p\notin\Lambda^\times}\cd_p(\kappa(x))<\infty,\]
where $\cd_p(k)$ is the mod $p$ Galois cohomological dimension of a field $k$.
 If $X$ is quasi-compact and $\Lambda$-finite and $Y\to X$ is of finite type, then $Y$ is also $\Lambda$-finite. 
We shall say that $X$ is \emph{étale-locally $\Lambda$-finite} if it admits an étale covering by $\Lambda$-finite schemes. A quasi-compact scheme $X$ is étale-locally $\Lambda$-finite if and only if the schemes $X[\tfrac 12,\zeta_4]$ and $X[\tfrac 13,\zeta_6]$ are $\Lambda$-finite, and also if and only if $X$ has finite Krull dimension and $\sup_{x\in X,p\notin\Lambda^\times}\vcd_p(\kappa(x))<\infty$. Every scheme essentially of finite type over $\Z$ is étale-locally $\Lambda$-finite, and the collection of étale-locally $\Lambda$-finite schemes is closed under Milnor pushouts.

\begin{lem}\label{lem:Det}
	Let $X$ be a scheme and $\Lambda$ a commutative ring.
	\begin{enumerate}
		\item If $X$ is qcqs and $\Lambda$-finite, then 
		\[
		\mathbf D^\et(X,\Lambda) \simeq \Ind(\mathbf D^\et_\ct(X,\Lambda))\quad\text{and}\quad \DA^\et(X,\Lambda) \simeq \Ind(\DA^\et_\ct(X,\Lambda)).
		\]
		\item If $X$ is the limit of a cofiltered diagram of schemes $X_i$ with affine transition morphisms and if $X$ and $X_i$ are étale-locally $\Lambda$-finite, then 
		\[
		\mathbf D^\et(X,\Lambda)\simeq \lim_i\mathbf D^\et(X_i,\Lambda)\quad\text{and}\quad \DA^\et(X,\Lambda)\simeq \lim_i\DA^\et(X_i,\Lambda).
		\]
		\item If $f\colon Y\to X$ is a qcqs morphism and $X$ and $Y$ are étale-locally $\Lambda$-finite, then
		\[
		f_*\colon \mathbf D^\et(Y,\Lambda)\to \mathbf D^\et(X,\Lambda)\quad\text{and}\quad f_*\colon \DA^\et(Y,\Lambda)\to \DA^\et(X,\Lambda)
		\]
		 preserve colimits.
	\end{enumerate}
\end{lem}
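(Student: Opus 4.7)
The plan is to prove (i) first and then leverage it for (ii) and (iii).

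For (i), the strategy is to identify the constructible subcategory with the compact objects and check that it generates. The key technical input is that $\Lambda$-finiteness bounds the étale cohomological dimension of $X$, which forces $\Lambda_Y$ to be a compact object of $\mathbf D^\et(X,\Lambda)$ for every finitely presented étale $Y\to X$. These objects generate $\mathbf D^\et_\ct(X,\Lambda)$ under finite colimits, retracts, and shifts, and generate $\mathbf D^\et(X,\Lambda)$ under arbitrary colimits; together with idempotent completeness of the constructible subcategory this gives the first equivalence. For $\DA^\et$ the analogous statement is proved by Ayoub in \cite{AyoubEtale} via a parallel argument using the motivic six-functor formalism, and this is the main external result we invoke.

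For (ii), both $\mathbf D^\et(-,\Lambda)$ and $\DA^\et(-,\Lambda)$ satisfy étale hyperdescent. Taking an étale hypercover $X'_\bullet\to X$ by $\Lambda$-finite schemes and pulling back to every $X_i$, one reduces via descent to the case where every scheme in sight is $\Lambda$-finite. In that case, using (i), both sides of each equivalence are $\Ind$-completions of the respective subcategories of constructibles, so it suffices to establish
\[
\colim_i \mathbf D^\et_\ct(X_i,\Lambda) \isoto \mathbf D^\et_\ct(X,\Lambda),\qquad \colim_i \DA^\et_\ct(X_i,\Lambda)\isoto \DA^\et_\ct(X,\Lambda).
\]
This is a standard spreading-out argument: every finitely presented étale scheme over $X=\lim X_i$, as well as every morphism between such, descends to some $X_{i_0}$, so every constructible object on $X$ is the pullback of a constructible object from some finite stage; one then applies $\Ind$.

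For (iii), the content is that $f^*$ preserves compact objects. Colimit-preservation of $f_*$ may be checked étale-locally on $X$ (and then on $Y$) thanks to étale hyperdescent on both sides, which reduces us to the case where $X$ and $Y$ are both $\Lambda$-finite. In that case (i) identifies compact objects with constructibles, and $f^*$ sends the generating constructibles ($\Lambda_{Y'}$ and their motivic analogues for $Y'\to X$ finitely presented étale) to constructible, hence compact, objects; so $f^*$ preserves compactness and $f_*$ preserves colimits. I expect the main obstacle to be setting up these two étale-local reductions cleanly in the $\infty$-categorical framework, but this is handled by the hyperdescent properties built into the construction of $\DA^\et$ in \cite[\sectsign 3]{AyoubEtale} and the tautological étale descent on the $\mathbf D^\et$ side.
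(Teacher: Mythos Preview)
Your overall strategy matches the paper's: bound cohomological dimension to identify constructibles with compacts in (i), then deduce (ii) and (iii) formally after an étale-local reduction to the $\Lambda$-finite case. Part (i) and the deduction of (iii) are essentially what the paper does.

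The gap is in your reduction step for (ii). You propose ``taking an étale hypercover $X'_\bullet\to X$ by $\Lambda$-finite schemes and pulling back to every $X_i$,'' but the structure maps go $X\to X_i$, so there is no pullback in that direction. What you actually need is a single étale cover of some $X_{i_0}$ (or of the base) whose pullback to every later $X_i$ and to $X$ has $\Lambda$-finite members. Spreading out an arbitrary $\Lambda$-finite cover of $X$ does not obviously achieve this: if $U\to X$ descends to $U_{i_0}\to X_{i_0}$, there is no reason $U_{i_0}$ should be $\Lambda$-finite (étale-local $\Lambda$-finiteness of $X_{i_0}$ does not force an arbitrary étale $X_{i_0}$-scheme to be $\Lambda$-finite). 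The paper sidesteps this by exploiting the explicit characterization given just before the lemma: a qcqs scheme is étale-locally $\Lambda$-finite iff its pullbacks along the fixed étale cover $\{\Spec\Z[\tfrac12,\zeta_4],\Spec\Z[\tfrac13,\zeta_6]\}\to\Spec\Z$ are $\Lambda$-finite. Since this cover lives over $\Spec\Z$, it pulls back compatibly to all $X_i$ and to $X$, and the hypothesis guarantees every term is $\Lambda$-finite. After this reduction (and a preliminary Zariski reduction to qcqs), your spreading-out argument for constructibles is exactly what remains, and the paper simply cites it as a formal consequence of (i).
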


\begin{proof}
	(i) Let $d = \dim(X)+\sup_{x\in X,p\notin\Lambda^\times}\cd_p(\kappa(x))+1$.
	For any qcqs étale $X$-scheme $U$ and any étale sheaf of $\Lambda$-modules $\sF$ on $U$, we have $H^n_\et(U,\sF)=0$ for $n>d$. Indeed, this follows from \cite[Corollary 3.29]{clausen-mathew}, noting that the $p$-local Galois cohomological dimension of a field $k$ is at most $\cd_p(k)+1$.
	The result for $\mathbf D^\et$ is now \cite[Proposition 6.4.8]{proetale}, and the analogue for $\DA^\et$ is an easy consequence (cf.\ \cite[Proposition 3.19]{AyoubEtale}).
	
	(ii) By Zariski descent, we may assume that $X_i$ and hence $X$ are qcqs. 
	Using descent along the étale covering $\{\Spec \Z[\tfrac 12,\zeta_4],\Spec \Z[\tfrac 13,\zeta_6]\}$ of $\Spec \Z$, we may further assume that $X$ and $X_i$ are $\Lambda$-finite.
	 The result is then a formal consequence of (i), cf.\ \cite[Proposition~3.20]{AyoubEtale}.
	 
	 (iii) By étale descent, we can assume that $X$ and $Y$ are qcqs and $\Lambda$-finite. Then the result follows immediately from (i).
\end{proof}

\begin{rem}
	For $\mathbf D^\et(X,\Lambda)$ to be compactly generated, it suffices that $X$ be qcqs and étale-locally $\Lambda$-finite. However, this does not suffice for the conclusion of Lemma~\ref{lem:Det}(i), as the case $X=\Spec\mathbf R$ and $\Lambda=\Z/2$ shows: there the unit object is constructible but not compact.
\end{rem}

\begin{lem}\label{lem:DA}
	Let $X$ be a scheme and $\Lambda$ a commutative ring.
	\begin{enumerate}
		\item If $\Lambda$ is a $\Q$-algebra and $X$ is locally of finite Krull dimension, then $\DA^\et(X,\Lambda)\simeq \Mod_{H\Lambda}(\SH(X))$.
		\item If $\Lambda$ is a $\Z/n$-algebra for some integer $n$ invertible on $X$ and if $X_x^\mathrm{sh}$ is $\Lambda$-finite for every geometric point $x$ of $X$, then $\DA^\et(X,\Lambda)\simeq \mathbf D^\et(X,\Lambda)$.
		\item If $p$ is a prime that is locally nilpotent on $X$, then $\DA^\et(X,\Lambda)\simeq \DA^\et(X,\Lambda[\tfrac 1p])$.
	\end{enumerate}
\end{lem}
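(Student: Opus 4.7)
The plan for all three parts follows a common pattern: reduce to $X$ Noetherian of finite Krull dimension by combining Zariski descent with a continuity argument, then invoke an existing comparison theorem in that reduced case. Concretely, both $\DA^\et(-,\Lambda)$ and the target categories are Zariski sheaves, so I first restrict to qcqs $X$. Absolute Noetherian approximation then writes $X\simeq\lim_i X_i$ as a cofiltered limit of Noetherian finite-dimensional schemes with affine transition maps, and \lemref{lem:Det}(ii) handles the continuity of $\DA^\et$ and $\mathbf D^\et$ along such limits, under the appropriate $\Lambda$-finiteness hypotheses that I will verify in each case.

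For part (i), since $\Lambda$ is a $\Q$-algebra the cohomological-dimension supremum in the definition of $\Lambda$-finite is vacuous, so every qcqs scheme of finite Krull dimension is $\Lambda$-finite; hence \lemref{lem:Det}(ii) applies to $\DA^\et$ on the left-hand side. For the right-hand side, I use that $H\Lambda$ is pulled back from $\Spec\Z$ and that $\SH(-)^\omega$ is finitary (as already exploited in this paper) to obtain a matching continuity statement for $\Mod_{H\Lambda}(\SH(-))$. This reduces to Noetherian finite-dimensional $X$, where the equivalence is due to Cisinski--D\'eglise: namely, their identification of Beilinson motives with $\Mod_{H\Q}(\SH(-))$, extended tensorially to arbitrary $\Q$-algebra coefficients.

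For part (ii), Zariski descent again reduces to qcqs $X$, and the hypothesis on strict henselizations implies $X$ is \'etale-locally $\Lambda$-finite, which is precisely what \lemref{lem:Det}(ii) requires. The same continuity reduction then brings us to the Noetherian case, where the equivalence is Ayoub's \'etale rigidity theorem. For part (iii), I observe that $\DA^\et(X,\Lambda)\simeq\DA^\et(X,\Lambda[\tfrac1p])$ is equivalent to saying that $p$ acts invertibly on $\DA^\et(X,\Lambda)$, which in turn is equivalent to the vanishing $\DA^\et(X,\Lambda/p)\simeq 0$ whenever $p$ is locally nilpotent on $X$; this vanishing extends Suslin--Voevodsky rigidity to the characteristic-$p$ setting and is due to Ayoub.

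The main obstacle is establishing the continuity of $\Mod_{H\Lambda}(\SH(-))$ from the finitary property of $\SH(-)^\omega$ used in part (i): this demands a careful argument about compact generation of module categories and base change across a pullback of presentable stable $\infty$-categories, since $H\Lambda$ is not obviously compact in $\SH(X)$. A secondary difficulty is locating and citing the sharpest forms of Cisinski--D\'eglise's comparison theorem and of Ayoub's rigidity and vanishing results in a form that matches exactly the hypotheses of each statement.
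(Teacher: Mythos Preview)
Your approach to parts (i) and (iii) is essentially the paper's. For (i), the paper first reduces to $\Lambda=\Q$ by taking modules, then uses Zariski descent and \lemref{lem:Det}(ii) to reduce to the noetherian finite-dimensional case, where it cites Cisinski--D\'eglise. The paper is terse about continuity of $\Mod_{H\Q}(\SH(-))$; your explicit handling of this via the finitary property of $\SH(-)^\omega$ is appropriate, and reducing to $\Lambda=\Q$ first would let you avoid the extra complication of general $\Q$-algebras. For (iii) the paper invokes nilinvariance to pass to $\mathbf F_p$-schemes and then the Artin--Schreier sequence, which is exactly the vanishing you describe.

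Part (ii) has a genuine gap. The implication ``$X_x^{\mathrm{sh}}$ is $\Lambda$-finite for all geometric points $x$ $\Rightarrow$ $X$ is \'etale-locally $\Lambda$-finite'' is false. For qcqs $X$, \'etale-local $\Lambda$-finiteness forces $X$ itself to have finite Krull dimension, whereas the strict-henselization hypothesis only bounds the dimension of each local ring. Nagata's example of a noetherian ring of infinite Krull dimension whose localizations all have finite dimension gives an affine $X$ satisfying the hypothesis of (ii) (by Gabber's theorem, cf.\ the remark following the lemma) that is \emph{not} \'etale-locally $\Lambda$-finite, so \lemref{lem:Det}(ii) does not apply and your continuity reduction breaks down. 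The paper instead first proves (ii) for $X$ of finite type over $\Z$ directly from Ayoub, extends this to the qcqs \'etale-locally $\Lambda$-finite case via \lemref{lem:Det}(ii), and then handles the general case by a separate argument borrowed from the second half of Ayoub's proof of his Th\'eor\`eme~4.1. Note, however, that the paper explicitly flags the general case as one it will not use, so your argument does cover everything needed for \thmref{thm:DA}.
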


\begin{proof}
	(i) We may assume $\Lambda=\Q$, as both sides are obtained from this case by taking $\Lambda$-modules. By construction, Spitzweck's $H\Q$ is the Beilinson motivic cohomology spectrum of Cisinski and Déglise. 
	Using Zariski descent and Lemma~\ref{lem:Det}(ii), we can assume $X$ noetherian of finite Krull dimension. In this case the result is precisely \cite[Theorem 16.2.18]{CD}.
	
	(ii) If $X$ is of finite type over $\Z$, this follows from \cite[Théorème 4.1]{AyoubEtale}. By Lemma~\ref{lem:Det}(ii), the conclusion holds whenever $X$ is qcqs and étale-locally $\Lambda$-finite. The general case (which we will not use) follows from this case as in the second half of the proof of \cite[Théorème 4.1]{AyoubEtale}.
	
	(iii) By nilinvariance, we can assume that $X$ is an $\mathbf F_p$-scheme. Then the result follows from the Artin–Schreier exact sequence, see \cite[Lemma 3.10]{AyoubEtale}.
\end{proof}

\begin{rem}
	By a theorem of Gabber \cite[Exposé XVIII\textsubscript{A}, Corollaire 1.2]{Gabber}, the assumption on $X$ in Lemma~\ref{lem:DA}(ii) holds whenever $X$ is locally noetherian. However, it does not hold in general.
	Indeed, if $\Gamma$ is any totally ordered abelian group, there exists a strictly henselian valuation ring $R$ with value group $\Gamma$ (for example, the strict henselization of the ring of Hahn series $k[[t^{\Gamma_{\geq 0}}]]$ over a field $k$). If $l$ is a prime invertible in $R$, the $l$-cohomological dimension of $\Frac(R)$ is then equal to the rank of $\Gamma/l$ \cite[Lemma 1.2]{Chipchakov}, which can be infinite even if $R$ has rank $1$ (i.e., even if $\Gamma\subset\mathbf R$).
\end{rem}

\begin{lem}\label{lem:fracture}
	Let $\sC$ be a cocomplete prestable $\infty$-category, which is either compactly generated or $\Z$-linear. If $X\in\sC$ is such that $X\otimes\Q=0$ and $X/p=0$ for every prime $p$, then $X=0$.
\end{lem}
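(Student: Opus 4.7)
The plan is to reduce the vanishing of $X$ to a classical fact about abelian groups by testing against compact objects. Since $\sC$ is compactly generated, it suffices to show that the mapping spectrum $E_c := \Maps_\sC(c,X)$ vanishes for every compact $c\in\sC$. For such $c$, the functor $\Maps_\sC(c,-)$ commutes with the filtered colimit defining $(-)\otimes\Q$ (because $c$ is compact) and with the cofiber defining $(-)/p$ (because cofiber sequences are fiber sequences in any stable $\infty$-category, and $\Maps_\sC(c,-)$ preserves limits). Hence $E_c\otimes\Q\simeq 0$ and $E_c/p\simeq 0$ for every prime $p$.

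Passing to homotopy groups, I would fix $n\in\Z$ and set $A := \pi_n E_c$. The first vanishing gives $A\otimes\Q=0$, forcing $A$ to be torsion; the long exact sequence associated to multiplication by $p$ on $E_c$ then gives $A[p]=0$ for every prime $p$. But a nonzero torsion abelian group necessarily contains nontrivial $p$-torsion for some prime $p$: any nonzero element of minimal order $n>1$ yields, upon multiplication by $n/p$ for any prime $p\mid n$, a nonzero element of order exactly $p$. This forces $A=0$ for every $n$, so $E_c\simeq 0$, and ranging over compact generators yields $X=0$.

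The argument is entirely formal and I do not anticipate any substantive obstacle. The only point meriting a sanity check is the implicit interpretation of $X\otimes\Q$ and $X/p$ in $\sC$, namely as the filtered colimit over $(\N^\times,\cdot)$ and the cofiber of $p\colon X\to X$ respectively; with those conventions the interchange with $\Maps_\sC(c,-)$ invoked above is automatic from compactness of $c$ and the stability of $\sC$.
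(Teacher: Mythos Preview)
Your proof is correct and essentially identical to the paper's: both reduce to showing $[K,X]=0$ for compact $K$, use compactness to pass $\otimes\Q$ through the mapping functor to deduce torsion, and use the cofiber sequence for $X/p$ to extract injectivity of $p$ on homotopy groups (the paper phrases this as $\mathrm{Tor}([K,X],\Z/p)$ being a quotient of $[\Sigma K,X/p]=0$, which is exactly your $A[p]=0$). The only cosmetic difference is that the paper works directly with the homotopy groups $[K,X]$ while you pass through the mapping spectrum $E_c$ first.
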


\begin{proof}
	Suppose first that $\sC$ is compactly generated. Since $\sC$ is additive, it is enough to show that $[K,X]=0$ for every compact object $K\in\sC$. By compactness, we have $[K,X]\otimes\Q\simeq [K,X\otimes \Q]=0$, so $[K,X]$ is torsion. From the fiber sequence $X\to X\to X/p$ (using the prestability of $\sC$), we see that $\mathrm{Tor}([K,X],\Z/p)$ is a quotient of $[K,\Omega(X/p)]=0$ , so $[K,X]$ is torsionfree. Hence, $[K,X]=0$.
	Suppose now that $\sC$ is $\Z$-linear. Since $\Q/\Z$ is the filtered colimit of the $\Z$-modules $\Z/n$, we have $X\otimes_\Z \Q/\Z=0$. Using the cofiber sequence $\Z\to \Q\to \Q/\Z$ and the prestability of $\sC$, we deduce that $X=0$.
\end{proof}

\begin{thm}\label{thm:DA}
	Let $\Lambda$ be a commutative ring. The presheaf of $\infty$-categories $\DA^\et(-,\Lambda)$ satisfies Milnor excision on the category of étale-locally $\Lambda$-finite schemes.
\end{thm}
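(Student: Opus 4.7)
The plan is to combine Theorem~\ref{thm:main} with the arithmetic fracture principle of Lemma~\ref{lem:fracture} and the comparison results of Lemma~\ref{lem:DA}. Zariski descent together with Lemma~\ref{lem:Det}(ii) first reduces to the case of a Milnor square of qcqs étale-locally $\Lambda$-finite schemes. Under compact generation (Lemma~\ref{lem:Det}(i)), the comparison functor from $\DA^\et(X,\Lambda)$ to the Milnor pullback is an equivalence if and only if certain fiber/cofiber spectra of $\Lambda$-module mapping spectra between compact objects vanish. Since these are $\Lambda$-module spectra in an additive compactly generated $\infty$-category, Lemma~\ref{lem:fracture} reduces the statement to Milnor excision separately for $\DA^\et(-,\Lambda\otimes\Q)$ and for $\DA^\et(-,\Lambda/p)$ for each prime $p$.

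The rational factor is immediate: Lemma~\ref{lem:DA}(i) identifies $\DA^\et(-,\Lambda\otimes\Q)$ with $\Mod_{H(\Lambda\otimes\Q)}(\SH(-))$, and Milnor excision follows from Theorem~\ref{thm:main} together with the remark after it on module categories over motivic ring spectra.

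For the mod-$p$ factor, I would split the base $X$ using the complementary open-closed decomposition $X=X[\tfrac{1}{p}]\cup V(p)$. A Milnor square over $X$ restricts to Milnor squares on each part, and the two parts recombine via the $(j_!,i^*)$-localization sequence attached to this open/closed pair. On the closed $p$-nilpotent piece $V(p)$, Lemma~\ref{lem:DA}(iii) gives $\DA^\et(-,\Lambda/p)\simeq \DA^\et(-,(\Lambda/p)[\tfrac{1}{p}])=0$ and there is nothing to prove. On the open $p$-inverted piece, Lemma~\ref{lem:DA}(ii) identifies $\DA^\et(-,\Lambda/p)\simeq \mathbf D^\et(-,\Lambda/p)$, and Milnor excision for the derived $\infty$-category of étale sheaves of $\Lambda/p$-modules is classical: Milnor squares induce cocartesian squares of small étale $\infty$-topoi, and derived $\infty$-categories of hypersheaves inherit Milnor excision from this topos-theoretic statement.

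The main obstacle will be the careful categorical execution of the fracture step, namely verifying that tensoring the $\Lambda$-linear category $\DA^\et(X,\Lambda)$ with $\Lambda\otimes\Q$ and with $\Lambda/p$ computes $\DA^\et(X,\Lambda\otimes\Q)$ and $\DA^\et(X,\Lambda/p)$ respectively, and that the comparison map to the Milnor pullback is compatible with these base changes so that Lemma~\ref{lem:fracture} actually reassembles the pieces. Secondary technical points include ensuring that the hypotheses of Lemma~\ref{lem:DA}(ii) on strict henselizations hold on the $p$-inverted locus of an étale-locally $\Lambda$-finite scheme, and confirming that the open/closed localization sequence glues Milnor excision across $X[\tfrac{1}{p}]$ and $V(p)$; both are routine once one uses that Milnor squares pull back to Milnor squares along flat or closed-immersion base change.
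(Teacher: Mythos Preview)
Your outline is essentially the paper's proof: reduce to qcqs, apply the fracture lemma to split into a rational piece handled by Theorem~\ref{thm:main} via Lemma~\ref{lem:DA}(i), and a mod-$p$ piece handled by combining Lemma~\ref{lem:DA}(ii,iii) with Milnor excision for $\mathbf D^\et(-[\tfrac1p],\Z/p)$.

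Two points deserve sharpening. First, Lemma~\ref{lem:Det}(i) requires $\Lambda$-finite, not merely étale-locally $\Lambda$-finite; the paper inserts an étale descent step along the cover $\{\Spec\Z[\tfrac12,\zeta_4],\Spec\Z[\tfrac13,\zeta_6]\}$ (and first reduces $\Lambda$ to a localization of $\Z$) to arrange this. You will need the same reduction before invoking compact generation. Second, your assertion that ``Milnor squares induce cocartesian squares of small étale $\infty$-topoi'' and that this is ``classical'' is the one genuine soft spot: this statement is precisely Milnor excision for étale sheaves, and it is not classical. The paper cites \cite[Theorem~5.14]{arc} (Bhatt--Mathew, via arc descent) together with \cite[Corollary~3.2.12]{cdarc} to obtain it for $\mathbf D^\et_\ct(-,\Z/p)$, then uses Lemma~\ref{lem:cg-pullback}(ii) to pass to $\mathbf D^\et$. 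Your topos-theoretic phrasing is not wrong as a target, but it needs exactly this input, so you should cite it rather than absorb it into ``classical''.

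Your identification of the ``main obstacle''---compatibility of the comparison morphisms with $(-)\otimes\Q$ and $(-)/p$---is exactly right; the paper resolves it via Lemma~\ref{lem:Det}(iii), which says $f_*$ preserves colimits on $\Lambda$-finite schemes, so the exchange morphisms for $\DA^\et(-,\Q)$ and $\DA^\et(-,\Z/p)$ really are the rationalization and mod-$p$ reduction of those for $\DA^\et(-,\Lambda)$.
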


\begin{proof}
	For any morphism $\Lambda \to \Lambda'$, we have $\DA^\et(-,\Lambda')\simeq \Mod_{\Lambda'}(\DA^\et(-,\Lambda))$.
	We can thus assume that $\Lambda$ is a localization of $\Z$. Moreover, by Zariski descent and descent along the étale covering $\{\Spec \Z[\tfrac 12,\zeta_4],\Spec \Z[\tfrac 13,\zeta_6]\}$ of $\Spec \Z$, it suffices to consider $\Lambda$-finite qcqs schemes. Consider a Milnor square of such schemes
	\[
	\begin{tikzcd}
		W \ar{d}[swap]{g} \ar{r}{k} & Y \ar{d}{f} \\
		Z \ar{r}{i} & X\rlap.
	\end{tikzcd}
	\]
	By localization, the functor $(f^*,i^*)\colon \DA^\et(X,\Lambda) \to \DA^\et(Y,\Lambda)\times \DA^\et(Z,\Lambda)$ is conservative. As explained in Section~\ref{sec:six-op}, $\DA^\et(-,\Lambda)$ takes this square to a cartesian square if and only if certain morphisms in $\DA^\et(Y,\Lambda)$ and $\DA^\et(Z,\Lambda)$ are equivalences. By Lemma~\ref{lem:fracture}, this can be checked rationally and modulo $p$ for every prime $p$. Rationally, we have $\DA^\et(-,\Q)\simeq \Mod_{H\Q}(\SH(-))$ by Lemma~\ref{lem:DA}(i), and the latter satisfies Milnor excision by Theorem~\ref{thm:main}. By Lemma~\ref{lem:Det}(iii), the morphisms witnessing Milnor excision for $\DA^\et(-,\Q)$ are the rationalizations of the ones for $\DA^\et(-,\Lambda)$, hence the latter are rational equivalences. Modulo $p$, we have $\DA^\et(-,\Z/p)\simeq \mathbf D^\et(-[\tfrac 1p],\Z/p)$ by Lemma~\ref{lem:DA}(ii,iii) and localization. By Lemma~\ref{lem:Det}(i) and Lemma~\ref{lem:cg-pullback}(ii), it remains to show that $\mathbf D^\et_\ct(-[\tfrac 1p],\Z/p)$ satisfies Milnor excision. This is true (on all schemes) by \cite[Theorem 5.14]{arc} and \cite[Corollary 3.2.12]{cdarc}.
\end{proof}
 
\subsection*{Acknowledgments}
We are grateful to Joseph Ayoub for asking about Theorem~\ref{thm:DA} and for discussions about the proof, and to Denis-Charles Cisinski for further comments.

We thank the Isaac Newton Institute for Mathematical Sciences for support and hospitality during the program ``K-theory, algebraic cycles and motivic homotopy theory'' where this paper was written. This work was partially supported by EPSRC grant number EP/R014604/1.

\bibliographystyle{alphamod}

\let\mathbb=\mathbf

{\small
\bibliography{excise}
}

\parskip 0pt

\end{document}